\newtheorem{theorem}{Theorem}[section]
\newtheorem{lemma}[theorem]{Lemma}
\theoremstyle{definition}
\newtheorem{definition}[theorem]{Definition}
\numberwithin{equation}{section}
\author[Shuxin Wang]{Shuxin Wang}
\address{Department of Mathematics and Statistics, University of New Mexico,
  Albuquerque, NM 87131, USA}
\email{wsxmath@unm.edu}
\begin{document}

   \title{Well-posedness and Ill-posedness for the Nonlinear Beam Equation }

   \begin{abstract}

     We investigate Strichartz estimates for the nonlinear beam equation with initial data $f\in\dot{H}^s, g\in\dot{H}^{s-2}$ and $f\in H^s, g\in H^{s-2}$. We extend results of H. Lindblad and C. D.Sogge [10] and T. Cazenave and F. B. Weissler [4] to nonlinear beam equations to determine the minimal regularity that is needed to prove well-posedness and scattering results with low regularity data. Finally, we also use small dispersion analysis of  M. Christ,  J. Colliander and T. Tao [2] to prove the nonlinear beam equation is ill-posed in defocusing case $\omega=-1$ when $ 0<s<s_c=\frac{n}{2}-\frac{4}{\kappa-1}$.
  \end{abstract}

   \maketitle
\section{Introduction}
In recent years, various models involving the beam equations have been studied. Peletier and Troy [13] presented several such nonlinear equation models in the physics literature. E.Cordero and D.Zucco [1] studied dispersive properties of the linear beam equation.  B. Pausader [11], [12] investigated the well-posedness and scattering theory for nonlinear beam equations in the energy space.
In this paper, we consider the Cauchy problem for the nonlinear  beam equation 
\begin{equation}
\left\{ \begin{array}{lcl}
\partial_{t}^{2}u(t,x)+\triangle^{2}u(t,x)=\omega |u|^{\kappa-1}u(t,x),\\
u\mid_{t=0}=f(x) \\
\partial _{t}u\mid_{t=0}=g(x),\end{array}\right.
\end{equation}
where, $\omega=\pm 1$ and $1<\kappa<\infty,$ and $u:\mathbb{R}\times \mathbb{R}^n\rightarrow \mathbb{C}.$
The equation (1.1)  is said to be $defocusing$ when $ \omega<0$, and $focusing$ when $\omega>0$. We investigate the global and local well-posedness in fractional homogeneous and inhomogeneous Sobolev spaces for the Cauchy problem of this equation under minimal regularity assumptions on the initial data. The works [9], [17] used that the beam equation (1.1) with $\omega=0$ can be factorized as the following product $$(\partial_{t}^{2}+\triangle^{2})u=(i\partial_{t}+\triangle)(-i\partial_{t}+\triangle)u,$$ which displays the relation with the Schr\"{o}dinger equation. This suggests one can recover Strichartz estimates for the beam equation from  the ones for the Schr\"{o}dinger equation. Some classical references on Strichartz estimates for the Schr\"{o}dinger equation are provided by [3],[5],[8],[15].  However, the beam equation doesn't satisfy finite speed of propagation, and this turns out to be a source of difficulities in obtaining results analogous to the wave equation.

\begin{definition}
The $ inhomogeneous$ $Sobolev$ $  space$ $ W^{s,r}$ and  the $ homogeneous$ $Sobolev$ $space$ $\dot{W}^{s,r}$ are defined for  $s\in\mathbb{R}$ and $ 1<r< \infty$ as the closure of Schwartz functions $f$ under their respective norms
$$\|f\|_{W^{s,r}}=\|\langle D \rangle^s f\|_{L^r},$$
$$\|f\|_{\dot{W}^{s,r}}=\||D|^s f\|_{L^r},$$
where the fractional differentiation operators $\langle D \rangle^s$ and $|D|^s$ are the Fourier multipliers defined by
$$ \widehat{\langle D\rangle ^s f}(\xi):=\langle \xi\rangle ^s\hat{f}(\xi) \quad \text{and} \quad \widehat{| D| ^s f}(\xi):=| \xi|^s\hat{f}(\xi).$$
In particular, if $s=2$ then $\langle D\rangle^s= I-\triangle,$ where $I$ is the identity operator, and $|D|^s =-\triangle$. If $r=2$ these spaces are also denoted by $H^s(\mathbb{R}^n)$ and $\dot{H}^s(\mathbb{R}^n)$.
\end{definition}
The nonlinear beam equation (1.1) enjoys the scaling symmetry 
\begin{equation}
u(t,x)\mapsto\lambda ^{\frac{-4}{\kappa-1}}u\left (\frac{t}{\lambda^2},\frac{x}{\lambda}\right );\\\quad
f(x)\mapsto\lambda ^{\frac{-4}{\kappa-1}}f\left (\frac{x}{\lambda}\right );\\\quad
g(x)\mapsto\lambda ^{\frac{-4}{\kappa-1}-2}g\left (\frac{x}{\lambda}\right ).
\end{equation} If we compute the initial data  $\|\lambda ^{\frac{-4}{\kappa-1}}f\left (\frac{x}{\lambda}\right )\|_{\dot{H}^s}$ we see that 
\begin{equation}
\|\lambda ^{\frac{-4}{\kappa-1}}f\left (\frac{x}{\lambda}\right )\|_{\dot{H}^s}\thicksim \lambda^{-s+s_c}\|f\|_{\dot{H}^s},
\end{equation}
where$$s_c:=\frac{n}{2}-\frac{4}{\kappa-1}$$
 is the $critical$ $regularity$. This scale invariance predicts a relationship between time existence and regularity of initial data (see Principle 3.1 of Tao [16]). We expect that (1.1) is ill-posedness when $s<s_c$ and well-posedness when $s\geq s_c$.

  In this paper, we start out with the introduction of mixed space-time integrability estimates known as  Strichartz estimates for the beam equation. We extend the results of  E.Cordero, D.Zucco [1] for the linear beam equation and B. Pausader [11] for nonlinear beam equation in the energy space. Here we introduce further studies with initial data $f\in\dot{H}^s, g\in\dot{H}^{s-2}$ and also give these estimates in  inhomogeneous Sobolev spaces ${H}^s\times{H}^{s-2}$.   

   The local and global well-posedness of semilinear dispersive equations has attracted a lot of attention in the past years. In general, when global well-posedness is established, the existence of a scattering operator, comparing the nonlinear dynamics and the linear one, is a rather direct by-product.  H. Lindblad and C. D.Sogge [10] and T. Cazenave and F. B. Weissler [4] proved existence and for semilinear wave and Schr\"{o}dinger equations with low regularity data and determine the minimal Sobolev regularity that is needed to ensure local well-posedness.  H. Lindblad and C. D.Sogge [10] took advantage of the Strichartz estimates to prove a well-posedness theorem for the nonlinear wave equation with rough initial data by the Picard iteration method. By this method,  in section 3, we investigate well-posedness with initial data  $f(x)\in\dot{ H}^{s}(\mathbb{R}^n),g(x)\in \dot{ H}^{s-2}(\mathbb{R}^n)$, and $f(x)\in H^{s}(\mathbb{R}^n),g(x)\in H^{s-2}(\mathbb{R}^n)$ for ``energy critical", ``energy subcritical" exponents $\kappa \leq \frac{n+4}{n-4}$ and ``energy supercritical" exponents $\kappa > \frac{n+4}{n-4},$ and  determine the minimal Sobolev regularity that is needed to ensure local and global well-posedness for the nonlinear beam equation.
Since the beam equation doesn't satisfy finite speed propagation, we use a fractional chain rule to deal with the ``energy super critical" case. 

 Section 4 is concerned the asymptotic completeness and scattering for small amplitude solutions.
  
 There are certain equations and certain regularities for which the Cauchy problem is ill-posed. M. Christ,  J. Colliander and T. Tao [2] give examples of solution to nonlinear wave and Schr\"{o}dinger equations on $\mathbb{R}^n$ which show that problem is ill-posed in the Sobolev space when the exponent $s$ is lower than the critical exponent predicted by scaling. Then in the last section we discuss the ill-posedness results for the Cauchy problem of the nonlinear beam equation with $0<s<s_c$  by small dispersion analysis of  M. Christ,  J. Colliander and T. Tao.

\section*{Acknowledgements}
This work is part result of the author’s doctoral dissertation research at the University of New Mexico. The author heartily acknowledged Professor Matthew Blair for his introduction to the problem, his continuing to encourage and guide throughout the research.

 \section{Strichartz estimates }
\label{main}
We first introduce some notations and definitions that will be frequently used in this paper. The expression $X \lesssim Y$ means $X\leq CY$ for some constant $C$. The mixed Strichartz space-time norm is defined as the following 
$$\|u\|_{L^p_I L^r(\mathbb{R}^n)}=\left [\int_I\left (\int_{\mathbb{R}^n}|u(t,x)|^r dx\right )^{\frac{p}{r}}dt\right ]^{\frac{1}{p}}.$$
The Strichartz estimates involve the following definitions:
\begin {definition}
We say that the exponent pair $(p,q)$ is a Schr$\text{\"{o}}$dinger-admissible pair if
$$2\leq p,q\leq \infty,\quad  \frac{2}{p}+\frac{n}{q}=\frac{n}{2}, \quad n\geq 1, \quad (p,q,n)\neq(2,\infty,2).$$
\end{definition}

\begin{definition}
We say that the exponent triple $(p,r,s)$ is a beam-admissible triple if $s\geq 0$ and 
$$2\leq p,r\leq \infty,\quad  \frac{2}{p}+\frac{n}{r}=\frac{n}{2}-s,\quad  n\geq 2, \quad (p,r,n)\neq(2,\infty,2).$$
\end{definition}
Consider the linear  beam equation, %$u(t,\cdot)\in (\mathbb{R}\times{ \mathbb{R}}^n)$ 
\begin{equation}
\left\{ \begin{array}{lcl}
\partial_{t}^{2}u +\triangle^{2}u=F,\\
u\mid_{t=0}=f,\\
\partial _{t}u\mid_{t=0}=g.\end{array}\right.
\end{equation}
The solution of this equation can be formally written in the integral form 
$$ u(t,\cdot)=\cos(t\triangle)f +\frac{\sin(t\triangle)}{\triangle}g+\int_{0}^{t}\frac{\sin((t-s)\triangle)}{\triangle}F(s)ds.$$
We have the following theorem about Strichartz estimates for solutions to the  beam equation with initial data $f\in\dot{H}^s, g\in\dot{H}^{s-2}$.
\begin{theorem}
Let $n\geq1, s\in\mathbb{R}$, $I$ be either the interval $[ 0, T]$, $T>0$, or $[0, \infty)$, $(p,r,s)$ be a beam-admissible triple, (a,b) is a Schr$\ddot{o}$dinger-admissible pair, and $ (a',b')$ is the conjugate pair of $(a,b)$. If u is a solution to the Cauchy problem  (2.1), then we have the following estimates:
\begin{multline}
\|u\|_{L^{p}_IL^r}+\|u\|_{L^\infty_I{\dot{H}}^s(\mathbb{R}^n)}+\|\partial_t u\|_{L^\infty_I{\dot{H}}^{s-2}(\mathbb{R}^n)}\\
\lesssim\|f\|_{\dot{H}^s}+\|g\|_{\dot{H}^{s-2}}+\|F\| _{L^{a'}_I\dot{W}^{s-2,b'}} ,
\end{multline}
with implicit constant independent of  $I$.
In particular, when $ 0\leq s\leq 2$, $\tilde{b}=\frac{nb'}{n+(2-s)b'},  \tilde{b}>1. $
\begin{equation}
\|u\|_{L^{p}_IL^r}+\|u\|_{L^\infty_I{\dot{H}}^s(\mathbb{R}^n)}+\|\partial_t u\|_{L^\infty_I{\dot{H}}^{s-2}(\mathbb{R}^n)}\lesssim\|f\|_{\dot{H}^s}+\|g\|_{\dot{H}^{s-2}}+\|F\| _{L^{a'}_IL^{\tilde{b}}},
\end{equation}
with implicit constant independent of  $T$,  where ,
$$\frac{2}{p}+\frac{n}{r}=\frac{n}{2}-s=\frac{2}{a'}+\frac{n}{\tilde{b}}-4.$$

\end{theorem}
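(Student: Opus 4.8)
The plan is to deduce the beam estimates from the classical Strichartz estimates for the Schr\"odinger propagator, using the factorization $\partial_t^2+\triangle^2=(i\partial_t-\triangle)(-i\partial_t-\triangle)$ recorded in the introduction. Euler's formula rewrites the propagators in the Duhamel formula
$$u(t)=\cos(t\triangle)f+\frac{\sin(t\triangle)}{\triangle}g+\int_0^t\frac{\sin((t-s)\triangle)}{\triangle}F(s)\,ds$$
as $\cos(t\triangle)=\tfrac12(e^{it\triangle}+e^{-it\triangle})$ and $\frac{\sin(t\triangle)}{\triangle}=-\tfrac1{2i}|D|^{-2}(e^{it\triangle}-e^{-it\triangle})$, where $e^{\pm it\triangle}$ are, up to the sign of the symbol, the free Schr\"odinger propagators. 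The crucial observation is that $\triangle^{-1}=-|D|^{-2}$ converts the $\dot H^{s-2}$ data $g$ of the beam equation into $L^2$ data for a Schr\"odinger evolution, which is exactly the regularity that the Schr\"odinger Strichartz estimates consume.

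First I would apply $|D|^s$ to the Duhamel formula; since $|D|^s$ commutes with every propagator, the data of $|D|^s u$ are $|D|^sf$ (with $\||D|^sf\|_{L^2}=\|f\|_{\dot H^s}$) and $-\tfrac1{2i}|D|^{s-2}(e^{it\triangle}-e^{-it\triangle})g$ (with $\||D|^{s-2}g\|_{L^2}=\|g\|_{\dot H^{s-2}}$), while the forcing becomes $\int_0^t e^{\pm i(t-s)\triangle}|D|^{s-2}F(s)\,ds$. For any Schr\"odinger-admissible pair $(p,q)$, the homogeneous Strichartz estimate applied to the data, together with the inhomogeneous Strichartz estimate applied to the forcing (with source pair $(a',b')$, invoking the Christ--Kiselev lemma to pass from the full integral to the retarded one where $p>a'$), yields
$$\big\||D|^su\big\|_{L^p_IL^q}\lesssim\|f\|_{\dot H^s}+\|g\|_{\dot H^{s-2}}+\big\||D|^{s-2}F\big\|_{L^{a'}_IL^{b'}}=\|f\|_{\dot H^s}+\|g\|_{\dot H^{s-2}}+\|F\|_{L^{a'}_I\dot W^{s-2,b'}}.$$
Taking $(p,q)=(\infty,2)$, which is Schr\"odinger-admissible for every $n\ge1$, controls $\|u\|_{L^\infty_I\dot H^s}$; differentiating the Duhamel formula in $t$ replaces $\cos(t\triangle)$, $\sin(t\triangle)/\triangle$ and $\sin((t-s)\triangle)/\triangle$ by $-\triangle\sin(t\triangle)$, $\cos(t\triangle)$ and $\cos((t-s)\triangle)$ respectively, and the same $(\infty,2)$ estimate, now carrying the $|D|^{s-2}$ weight, controls $\|\partial_t u\|_{L^\infty_I\dot H^{s-2}}$. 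Since the Schr\"odinger Strichartz constants are independent of the time interval, so are the resulting bounds.

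Next I would upgrade the $\dot W^{s,q}$ control of $u$ to $L^r$ control. Given a beam-admissible triple $(p,r,s)$ with $r<\infty$, define $q$ by $\frac1q=\frac1r+\frac sn$; then $\frac2p+\frac nq=\frac2p+\frac nr+s=\frac n2$, so $(p,q)$ is Schr\"odinger-admissible, and the Sobolev embedding $\dot W^{s,q}(\mathbb{R}^n)\hookrightarrow L^r(\mathbb{R}^n)$ (valid for $s\ge0$, $1<q\le r<\infty$) gives $\|u\|_{L^p_IL^r}\lesssim\||D|^su\|_{L^p_IL^q}$; combined with the previous step this proves (2.2). For (2.3), with $0\le s\le 2$ one has $2-s\ge0$, so the Hardy--Littlewood--Sobolev inequality gives $\|F\|_{\dot W^{s-2,b'}}=\||D|^{-(2-s)}F\|_{L^{b'}}\lesssim\|F\|_{L^{\tilde b}}$ whenever $\tilde b=\frac{nb'}{n+(2-s)b'}>1$; and substituting the admissibility relation $\frac2a+\frac nb=\frac n2$ into $\frac n{\tilde b}=\frac n{b'}+2-s$ yields $\frac2{a'}+\frac n{\tilde b}-4=\frac n2-s$, so (2.3) follows from (2.2). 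The main difficulty is localized in the inhomogeneous Strichartz step --- in particular the behavior at or near the double endpoint $p=a'=2$, which I would handle via the Keel--Tao inhomogeneous estimates rather than by a naive Christ--Kiselev argument --- while the degeneracy of the Sobolev embedding at $r=\infty$ makes that boundary case of a beam-admissible triple, when it occurs, require a separate treatment.
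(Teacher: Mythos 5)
Your argument is correct and follows essentially the same route as the paper: reduce to Schr\"odinger Strichartz estimates via the propagators $e^{\pm it\triangle}$ (with Christ--Kiselev and Keel--Tao for the inhomogeneous/endpoint parts), take $(p,q)=(\infty,2)$ for the $\dot H^s\times\dot H^{s-2}$ bounds, use Sobolev embedding $\dot W^{s,q}\hookrightarrow L^r$ for the mixed norm, and use fractional integration to replace $\|F\|_{\dot W^{s-2,b'}}$ by $\|F\|_{L^{\tilde b}}$ in (2.3). The only cosmetic difference is that you re-derive the base estimate (2.4) that the paper simply cites from Cordero--Zucco, and you flag the double-endpoint and $r=\infty$ boundary cases more explicitly than the paper does.
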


\begin{proof}
By the work of E.Cordero, D.Zucco [1], the following estimates hold
\begin{equation}
\|u\|_{L^{p}_I\dot{W}^{s,q}}\lesssim\|f\|_{\dot{H}^s}+\|g\|_{\dot{H}^{s-2}}+\|F\| _{L^{a'}_I\dot{W}^{s-2,b'}}.
\end{equation} 
Where $(p,q) $ and $ (a,b)$ are Schr$\text{\"{o}}$dinger-admissible pairs.
For fixed $t$, by Sobolev embedding, we have
$$\|u(t,\cdot)\|_{L^r}\lesssim\|u(t,\cdot)\|_{\dot{W}^{s,q}},$$
when $\frac{1}{q}=\frac{1}{r}+\frac{s}{n},$
combining with 
$ \frac{2}{p}+\frac{n}{q}=\frac{n}{2}$
we have 
$$\frac{2}{p}+\frac{n}{r}=\frac{n}{2}-s.$$
Therefore we have the estimate
\begin{equation}
\|u\|_{L^{p}_IL^r}\lesssim\|f\|_{\dot{H}^s}+\|g\|_{\dot{H}^{s-2}}+\|F\| _{L^{a'}_I\dot{W}^{s-2,b'}} .
\end{equation}

Let $v$ be the solution of (2.1) with $F(t,x)=0$, $ w$ be the solution of (2.1) with vanishing initial data. Then the solution of (2.1) is  $u=v+w$. By the energy inequality  for the linear Cauchy problem, we have
\begin{equation}
\|v\|_{L^\infty_I{\dot{H}}^s(\mathbb{R}^n)}+\|\partial_t v\|_{L^\infty_I{\dot{H}}^{s-2}(\mathbb{R}^n)}\leq 2(\|f\|_{\dot{H}^s}+\|g\|_{\dot{H}^{s-2}}).
\end{equation}
For $w(t,\cdot)=\int_{0}^{t}\frac{\sin((t-s)\triangle)}{\triangle}F(s)ds$, in the non-endpoint case, by dualities of the operators $e^{it\triangle}$ and $\frac{e^{it\triangle}}{\triangle}$, the Christ-Kiselev lemma, and the endpoint Strichartz estimates of Keel-Tao [8], we have the inhomogeneous Strichartz estimates
$$\left\|\int_{0}^{t}\frac{e^{i(t-s)\triangle}}{\triangle}F(s)ds\right\|_{L^p_I\dot{W}^{s,q}}\lesssim||F\|_{L^{a'}_I\dot{W}^{s-2,b'}},$$
where $(p,q), (a,b)$ are Schr$\text{\"{o}}$dinger-admissible pairs. When $p=\infty$, $q=2$, by the definition of the homogeneous Sobolev space,
we have 
$$\|(-\triangle)^{\frac{s}{2}}w\|_{L^\infty_I{\dot{H}}^s(\mathbb{R}^n)}\lesssim\|F\|_{L^{a'}_I\dot{W}^{s-2,b'}}.$$
Then we have,
\begin{equation}
\|w\|_{L^\infty_I{\dot{H}}^s(\mathbb{R}^n)}+\|\partial_t w\|_{L^\infty_I{\dot{H}}^{s-2}(\mathbb{R}^n)}\lesssim\|F\|_{L^{a'}_I\dot{W}^{s-2,b'}}.
\end{equation}
Combining with (2.5),(2.6) and (2.7), we have the estimates (2.2).

Since,
$$\|F\|_{L^{a'}_I \dot{W}^{s-2,b'}}=\|(-\triangle) ^{\frac{s-2}{2}}F\|_{L^{a'}_I L^{b'}}.$$
Now assume $s\leq2$ , for fixed $t$, by the Theorem 1 of chapter 5 in [14] (which is equivalent to Sobolev embedding),$$\|(-\triangle)^{\frac{s-2}{2}}F(t,\cdot)\|_{L^{b'}}\lesssim\|F(t,\cdot)\|_{L^{\tilde{b}}},$$
where,
$\frac{1}{\tilde{b}}=\frac{1}{b'}+\frac{2-s}{n}.$
Then by the same way of proving (2.2), we have the Strichartz estimates (2.3)

\end{proof}

Now we consider the Strichartz estimates for solutions to the  beam equation with initial data $f\in H^s, g\in H^{s-2}$ (inhomogeneous Sobolev space), we have the following 
 \begin{theorem}
Let $n\geq1, s\in\mathbb{R}$, $I$ be the interval $[ 0, T]$, $0<T<\infty$,  (a,b) be Schr$\ddot{o}$dinger-admissible pair, and $ (a',b')$ be the conjugate pair of (a,b). If u is a solution to the Cauchy problem (2.1), then we have the following estimates,
\begin{multline}
\|u\|_{L^{p}_IL^r}+\|u(T,\cdot)\|_{{{H}}^s(\mathbb{R}^n)}+\|\partial_t u(T,\cdot)\|_{{{H}}^{s-2}(\mathbb{R}^n)}\\
\lesssim(1+|T|^{\frac{1}{p}+1})(\|f\|_{{H}^s}+\|g\|_{{H}^{s-2}}+\|F\| _{L^{a'}_I{W}^{s-2,b'}} ),
\end{multline}
where $ (p,r,s)$  satisfies the following condition 
$$2\leq p, r\leq \infty,\quad  \frac{2}{p}+\frac{n}{r}\geq\frac{n}{2}-s,\quad  n\geq 2, \quad (p, r, n)\neq(2,\infty,2).$$
\end{theorem}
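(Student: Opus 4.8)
The plan is to reduce everything to Theorem 2.4 by a Littlewood--Paley splitting of $u$ into low and high frequencies, treating the low-frequency piece directly from the integral representation of the solution. Let $P_{\le1}$ and $P_{>1}$ be the Fourier projections onto $\{|\xi|\le1\}$ and $\{|\xi|>1\}$; since they commute with $\partial_t^2+\triangle^2$, the functions $u_{\mathrm{lo}}:=P_{\le1}u$ and $u_{\mathrm{hi}}:=P_{>1}u$ each solve (2.1) with the correspondingly projected data and forcing, and $\|P_{\le1}h\|_{H^\sigma}\lesssim\|P_{\le1}h\|_{L^2}$ for every $\sigma$ while on $\{|\xi|>1\}$ the homogeneous and inhomogeneous Sobolev norms (and their Fourier-multiplier operators) are comparable. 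So it suffices to estimate the two pieces separately.

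For $u_{\mathrm{hi}}$, Theorem 2.4 directly gives $\|u_{\mathrm{hi}}\|_{L^{p_0}_IL^{r_0}}+\|u_{\mathrm{hi}}(T)\|_{H^s}+\|\partial_t u_{\mathrm{hi}}(T)\|_{H^{s-2}}\lesssim\|f\|_{H^s}+\|g\|_{H^{s-2}}+\|F\|_{L^{a'}_IW^{s-2,b'}}$, with constant independent of $T$, for every beam-admissible triple $(p_0,r_0,s)$ and, via (2.4), $\|u_{\mathrm{hi}}\|_{L^{p_0}_I\dot W^{\theta,q_0}}$ controlled by $\|f\|_{H^\theta}+\cdots$ for Schr\"odinger-admissible $(p_0,q_0)$ and $0\le\theta\le s$. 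To pass from beam-admissibility to the weaker hypothesis $\frac2p+\frac nr\ge\frac n2-s$, I would apply Sobolev embedding in $x$, $\|u(t)\|_{L^r}\lesssim\|u(t)\|_{\dot W^{\theta,q_0}}$ with $\frac1{q_0}=\frac1r+\frac\theta n$ and $\theta=\max\left(0,\frac n2-\frac nr-\frac2p\right)\le s$, and then H\"older in $t$ on $I=[0,T]$ to descend from $L^{p_0}_I$ to $L^p_I$; one checks that $(p_0,q_0)$ is admissible precisely because $(p,r,n)\ne(2,\infty,2)$ is excluded, and this step costs only a factor $|T|^{1/p}$. The energy terms at $t=T$ come for free from (2.2)/(2.7) applied to $P_{>1}f$, $P_{>1}g$, $P_{>1}F$.

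For $u_{\mathrm{lo}}$ everything is elementary. From $u_{\mathrm{lo}}(t)=\cos(t\triangle)P_{\le1}f+\frac{\sin(t\triangle)}{\triangle}P_{\le1}g+\int_0^t\frac{\sin((t-\tau)\triangle)}{\triangle}P_{\le1}F(\tau)\,d\tau$, the bounds $|\cos(t|\xi|^2)|\le1$ and $\left|\frac{\sin(\sigma|\xi|^2)}{|\xi|^2}\right|\le|\sigma|$ (from $|\sin x|\le|x|$) show these propagators act on $L^2$ with operator norms $\lesssim1$ and $\lesssim|t|$, while Bernstein's inequality moves between $L^2$ and $L^r$ and absorbs the low-order derivatives, yielding $\|P_{\le1}f\|_{L^2}\lesssim\|f\|_{H^s}$, $\|P_{\le1}g\|_{L^2}\lesssim\|g\|_{H^{s-2}}$ and $\|P_{\le1}F(\tau)\|_{L^2}\lesssim\|F(\tau)\|_{W^{s-2,b'}}$. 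Collecting these, estimating the Duhamel term by H\"older in time against the kernel $|t-\tau|$, and applying one more H\"older in time for the outer $L^p_I$-norm, one bounds $\|u_{\mathrm{lo}}\|_{L^p_IL^r}+\|u_{\mathrm{lo}}(T)\|_{H^s}+\|\partial_t u_{\mathrm{lo}}(T)\|_{H^{s-2}}$ by a polynomial in $|T|$ times the right-hand side; tracking the exponents produces the stated power $\frac1p+1$ — the $\frac1p$ from the time-H\"older for the $L^p_I$-norm, the extra $1$ from the linear-in-$t$ growth of $\frac{\sin(t\triangle)}{\triangle}$ near $\xi=0$.

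The main obstacle is exactly this low-frequency regime, i.e.\ the passage from homogeneous to inhomogeneous norms: when $s<2$ the homogeneous norms $\dot H^{s-2}$, $\dot W^{s-2,b'}$ of Theorem 2.4, and the propagator $\frac{\sin(t\triangle)}{\triangle}$ (which is not a bounded Fourier multiplier near the origin), all diverge at zero frequency, so Theorem 2.4 cannot be invoked directly with inhomogeneous data and forcing. The frequency decomposition confines that difficulty to $u_{\mathrm{lo}}$, where the complete absence of dispersive decay simultaneously forces the dependence of the constant on $T$ and renders every estimate explicit; the only genuine bookkeeping is the time integral in the Duhamel term against $|t-\tau|$, which is what fixes the final power of $|T|$.
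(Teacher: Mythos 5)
Your proposal is correct and follows the same overall strategy as the paper's proof: split into frequencies $|\xi|\le 1$ and $|\xi|\ge 1$, handle the high-frequency piece by Theorem 2.3 using that $|\xi|^{s-2}\approx(1+|\xi|^2)^{(s-2)/2}$ there, and handle the low-frequency piece by elementary $L^2$ bounds that cost powers of $T$. The one step you carry out differently is the low-frequency piece: the paper defines the energy $E(u_0;t)=\int\frac12|\partial_t u_0|^2+\frac12|\triangle u_0|^2\,dx$, uses the energy identity and Cauchy--Schwarz to get $|\partial_t E^{1/2}(u_0;t)|\lesssim\|F_0\|_{L^2}$, and then the fundamental theorem of calculus to control $\|u_0(t)\|_{L^2}$ (the extra factor of $|T|$ beyond $|T|^{1/p}$ arising from that time integration), whereas you read the same bounds directly off the propagators via $|\cos(t|\xi|^2)|\le 1$ and $|\sin(t|\xi|^2)|/|\xi|^2\le|t|$ together with Bernstein; the two mechanisms are equivalent, and yours is arguably more transparent about where the linear growth in $T$ originates. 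Two minor points: prefer a smooth cutoff $\beta(D)$ to the sharp projection $P_{\le 1}$ so that the localization acts harmlessly on $L^{b'}$ (the paper does this and then passes from $\|F_0\|_{H^{s-2}}$ to $\|F\|_{W^{s-2,b'}}$ by Young's inequality); and your Sobolev-embedding-plus-H\"older-in-time step relaxing beam-admissibility to $\frac{2}{p}+\frac{n}{r}\ge\frac{n}{2}-s$ addresses a point the paper's proof passes over silently, so it is worth keeping explicit.
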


\begin{proof}
Let $\beta (\xi)$ be a smooth cutoff function with the following properties
$$supp(\beta)\subset B_2(0),$$
$$supp(1-\beta)\subset\{|\xi|\geq 1\}.$$

Let $u_0=\beta(D)u,$  $F_0=\beta(D)F$, $u_1=(1-\beta(D))u,$ $F_1=(1-\beta(D))F$, where $u$ is the solution of  (2.1), then we have 
\begin{equation}
\partial_t^2u_0+\triangle^2u_0=F_0,
\end{equation}
\begin{equation}
\partial_t^2u_1+\triangle^2u_1=F_1.
\end{equation}
Since  $$|\xi|\geq1 \Longrightarrow |\xi|^{s-2}\approx (1+|\xi|^2)^{\frac{s-2}{2}},$$
 by Theorem 2.3  we have 
\begin{multline}
\|u_1\|_{L^{p}_IL^r}+\|u_1(T,\cdot)\|_{{{H}}^s(\mathbb{R}^n)}+\|\partial_t u_1(T,\cdot)\|_{{{H}}^{s-2}(\mathbb{R}^n)}\\
\lesssim\|f\|_{{H}^s}+\|g\|_{{H}^{s-2}}+\|F_1\| _{L^{a'}_I{W}^{s-2,b'}} .
\end{multline}
 Since, 
 $|\xi|\leq 1 \Longrightarrow (1+|\xi|^2)^{s_1}\approx(1+|\xi|^2)^{s_2}$ for any $s_1, s_2$, then for $u_0$,  Sobolev embedding gives,
\begin{equation}
\|u_0\|_{L^p_IL^r}\lesssim (1+|T|^{\frac{1}{p}})\|u_0\|_{L^{\infty}_IH^{s}}\approx (1+|T|^{\frac{1}{p}})\|u_0\|_{L_I^{\infty}L^2}.
\end{equation}
We define the energy of $u_0$ by 
\begin{equation}
E(u_0;t)=\int\frac{1}{2}|\partial_tu_0(t,x)|^2+\frac{1}{2}|\triangle u_0(t,x)|^2dx,
\end{equation}
the energy identity gives us
\begin{equation}
\partial_t E(u_0;t)=\int \partial_t u_0(t,x)F_0(t,x)dx.
\end{equation}
By Cauchy-Schwarz inequality
$$|\partial_t E^{\frac{1}{2}}(u_0;t)|\lesssim \|F_0\|_{L^2}\lesssim \|F_0\|_{H^{s-2}}.$$
By the fundamental theorem of calculus,
\begin{multline}
\|u_0(t,\cdot)\|_{L^2}\leq\|u_0(0,\cdot)\|_{L^2 }+\int_0^t\|\partial_t u_0(\tau,\cdot)\|_{L^2}d\tau
\leq\|u_0(0,\cdot)\|_{L^2 }+ \int_0^t E^{\frac{1}{2}}(u_0;\tau)d\tau\\\leq\|u_0(0,\cdot)\|_{L^2 }+  \int_0^t\|F_0(\tau,\cdot)\|_{L^2}d\tau \leq\|u_0(0,\cdot)\|_{L^2 }+ \|F_0\|_{L_I^1H^{s-2}}.
\end{multline}
Therefore we have 
\begin{equation}
\|u_0\|_{L^{\infty}_IH^{s}}+\|\partial_tu_0\|_{L^{\infty}_IH^{s-2}}\lesssim (1+|T|)( \|f\|_{H^s}+\|g\|_{H^{s-2}}+\|F_0\|_{L^1_I H^{s-2}}).
\end{equation}
Since $F_0(x)=\beta(x)^{\vee}\ast F(x)$, by Young's inequality,
\begin{equation}
\|F_0\|_{H^{s-2}}=||\beta^{\vee}\ast\langle D\rangle^{s-2} F\|_{H^{s-2}}\leq\|\beta^{\vee}\|_{L^{b_1}}\|\langle D\rangle^{s-2}F\|_{b'}\lesssim\|F\|_{W^{s-2,b'}},
\end{equation}
where, $\frac{1}{2}=\frac{1}{b_1}+\frac{1}{b'}-1$.
Combines (2.12) (2.15) (2.16) and (2.17), we have 
\begin{multline}
\|u_0\|_{L^{p}_IL^r}+\|u_0(T,\cdot)\|_{{{H}}^s(\mathbb{R}^n)}+\|\partial_t u_0(T,\cdot)\|_{{{H}}^{s-2}(\mathbb{R}^n)}\\
\lesssim(1+|T|^{\frac{1}{p}+1})(\|f\|_{{H}^s}+\|g\|_{{H}^{s-2}}+\|F\| _{L^{a'}_I{W}^{s-2,b'}} ).
\end{multline}
Therefore combines (2.11) and (2.18), we have the Strichartz estimates (2.8).

\end{proof}
In fact, the following counterexample tells us this Strichartz estimate is only valid locally. 
\begin{theorem}
For $T$ sufficiently large, we have
\begin{equation}
\sup_{g\in\mathcal{S}}\frac{\left\|\frac{\sin(t\triangle)}{\triangle}g\right\|_{L^\infty([0,T];H^s)}}{\|g\|_{H^s}}\geq c|T|.
\end{equation}
\end{theorem}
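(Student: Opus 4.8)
The plan is to exhibit, for every sufficiently large $T$, an explicit Schwartz function that realizes the claimed lower bound; the mechanism behind the estimate is the degeneracy of the propagator $\frac{\sin(t\triangle)}{\triangle}$ at low frequencies. On the Fourier side this operator acts as the Fourier multiplier with symbol $m_t(\xi)=\frac{\sin(t|\xi|^2)}{|\xi|^2}$ (extended by $m_t(0)=t$), and for $0\le\theta\le 1$ one has the elementary bound $\sin\theta\ge(\sin 1)\,\theta$, because $\theta\mapsto\sin\theta/\theta$ is decreasing on $(0,1]$. Hence
\[
m_t(\xi)\ \ge\ (\sin 1)\,t \qquad\text{whenever}\qquad t|\xi|^2\le 1 .
\]

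First I would localize a test function to the frequency window on which this bound holds throughout $[0,T]$. Fix any nonzero $\phi\in\mathcal{S}(\RR^n)$ with $\supp\widehat\phi\subset\{|\xi|\le 1\}$, and for $T>0$ define $g=g_T\in\mathcal{S}(\RR^n)$ by $\widehat g(\xi):=\widehat\phi\bigl(T^{1/2}\xi\bigr)$, so that $\supp\widehat g\subset\{|\xi|\le T^{-1/2}\}$. Then for every $t\in[0,T]$ and every $\xi\in\supp\widehat g$ we have $t|\xi|^2\le t/T\le 1$, so the symbol bound above applies uniformly on the support of $\widehat g$. By Plancherel's theorem, for each such $t$,
\begin{multline*}
\Bigl\|\tfrac{\sin(t\triangle)}{\triangle}g\Bigr\|_{H^s}^2
=\int_{\RR^n}\langle\xi\rangle^{2s}\,|m_t(\xi)|^2\,|\widehat g(\xi)|^2\,d\xi \\
\ge (\sin 1)^2\,t^2\int_{\RR^n}\langle\xi\rangle^{2s}\,|\widehat g(\xi)|^2\,d\xi
=(\sin 1)^2\,t^2\,\|g\|_{H^s}^2 .
\end{multline*}
Evaluating at $t=T\in[0,T]$ gives $\bigl\|\tfrac{\sin(t\triangle)}{\triangle}g\bigr\|_{L^\infty([0,T];H^s)}\ge(\sin 1)\,T\,\|g\|_{H^s}$, and dividing by $\|g\|_{H^s}$ yields $(2.19)$ with $c=\sin 1$ — in fact for every $T>0$ and every $s\in\RR$, so a fortiori for $T$ large.

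I do not expect any genuine obstacle here: the only point that needs attention is matching the frequency scale $T^{-1/2}$ to the time horizon $T$, so that the phase $t|\xi|^2$ never leaves the regime where $\sin$ is bounded below linearly; the rest is a direct Plancherel computation. (One could equally localize $\widehat g$ to an annulus $|\xi|\sim T^{-1/2}$; since these frequencies are $\le 1$, the weight $\langle\xi\rangle^{2s}$ is comparable to $1$ and nothing changes.) This counterexample is exactly what shows that the factor $|T|$ in Theorem 2.4 cannot be removed, i.e.\ that the inhomogeneous-Sobolev Strichartz estimate $(2.8)$ is genuinely a local-in-time statement.
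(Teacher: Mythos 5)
Your proposal is correct and is essentially the paper's own argument: both localize the data to frequencies $|\xi|\lesssim T^{-1/2}$, where the multiplier $\sin(t|\xi|^2)/|\xi|^2$ is comparable to $t$ on all of $[0,T]$, and then conclude by Plancherel. The differences are cosmetic — you use a ball and the uniform bound $\sin\theta\ge(\sin 1)\theta$ at $t=T$, while the paper uses an annulus $|\xi|\approx\epsilon$ evaluated at the quarter period $t=\tfrac{\pi}{2}\epsilon^{-2}$ — and your version has the small advantage of keeping $\|g\|_{H^s}$ in the denominator exactly as in the statement, whereas the paper's computation divides by $\|g\|_{H^{s-2}}$, which at these low frequencies is comparable anyway.
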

\begin{proof}
Taking $f=0$, the solution of the homogeneous beam equation will have the form $u(t,\cdot)=\frac{\sin(t\triangle)}{\triangle}g$. Let $\beta_\epsilon (\xi)$ be a smooth cutoff supported in 
$$supp(\beta_\epsilon)\subset \left\{ \xi:\frac{\epsilon^2}{2}\leq |\xi|^{2}\leq\frac{3\epsilon^2}{2}\right\}, \quad \epsilon \ll 1.$$
Set $\widehat{g(\xi)}=\beta_\epsilon (\xi)$, then
$$\|u(t,\cdot)\|^2_{H^s}=\int\left|\frac{\sin(t|\xi|^2)}{|\xi|^2}\beta_\epsilon(\xi)\langle \xi\rangle^s\right|^2d\xi.$$
Therefore, at $t=\frac{\pi}{2}\epsilon^{-2},$ $\|u(t,\cdot)\|_{H^s}\approx \epsilon^{-2}(\int|\beta_\epsilon(\xi)|^2 d\xi)^{\frac{1}{2}}.$ Also$$\|g\|_{H^{s-2}}\approx(\int|\beta_\epsilon(\xi)|^2 d\xi)^{\frac{1}{2}}.$$
Therefore,
\begin{align*}
\frac{\|u(\frac{\pi}{2}\epsilon^{-2},\cdot)\|_{H^s}}{\|g\|_{H^{s-2}}}
&\geq
\frac{\|u(\frac{\pi}{2}\epsilon^{-2},\cdot)\|_{H^s}}{\|g\|_{H^{s-2}}}\\
&\gtrsim \epsilon^{-2}\frac{(\int|\beta_\epsilon(\xi)|^2 d\xi)^{\frac{1}{2}}}{(\int|\beta_\epsilon(\xi)|^2 d\xi)^{\frac{1}{2}}}=\epsilon^{-2}\approx t.
\end{align*}
Therefore, we have (2.19)
for $T\gg1.$
    \end{proof}

\section{Well-posedness Theorems for Rough Data of the Beam Equation}
 In this section we are concerned with proving local well-posedness and global well-posedness for small data in $\dot{H}^s\times \dot{H}^{s-2}$ and local well-posedness in  $H^s\times H^{s-2}.$ 

  To prove the existence of the solution, we use Picard  iteration argument.  First we define  $F_\kappa(u)=\omega |u|^{\kappa-1}u$.  
Set  $u_{-1}\equiv0$, and  define $u_m, m=0,1,2,...,$ by\\
\begin{equation}
\left\{ \begin{array}{rcl}
(\partial_{t}^{2} +\triangle^{2})u_m=F_\kappa (u_{m-1}),\\
u_m\mid_{t=0}=f, \\
\partial _{t}u_m\mid_{t=0}=g.\end{array}\right.
\end{equation}
We need  show that there is a $0<T\leq \infty$  and a function $u$ so that 
\begin{equation}
u_m\rightarrow u\quad \text {and}\quad F_\kappa (u_m)\rightarrow F_\kappa (u), \quad \text{in}  \quad \mathcal{D}(S_T) \quad \text{with} \quad S_T=[0,T]\times {\mathbb{R}}^n.
\end{equation}

 For ``energy critical" and ``energy subcritical" exponents  $\kappa\leq\frac{n+4}{n-4}$ , when $f\in\dot{H}^s,  \quad g\in\dot{H}^{s-2}$, we have the following 

\begin{theorem}
Set $s=\frac{n}{2}-\frac{4}{\kappa-1},$ 
  if $ n>3,  \frac{8}{n}+1<\kappa\leq\frac{n+4}{n-4}$, then
 there is a $T>0$ a unique (weak) solution of (1.1) satisfying 
\begin{equation}
(u, \partial_t u)\in C([0,T]; \dot{H}^s\times \dot{H}^{s-2})\quad and\quad  u\in L^{\frac{(n+2)(\kappa-1)}{4}}(S_T).
\end{equation}
Moreover, there is $\epsilon(\kappa)>0$, so that if 
$$\|f\|_{\dot{H}^s}+\|g\|_{\dot{H}^{s-2}}<\epsilon(\kappa).$$
Then the one can take $T=\infty$. When $n=3, \kappa>5,$ we have the results above.

\end{theorem}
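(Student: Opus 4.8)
The plan is to solve the Duhamel formulation
\begin{equation*}
u(t)=\cos(t\triangle)f+\frac{\sin(t\triangle)}{\triangle}g+\int_{0}^{t}\frac{\sin((t-\tau)\triangle)}{\triangle}F_\kappa(u(\tau))\,d\tau=:\Phi(u)
\end{equation*}
by a contraction mapping argument run in the scaling-critical Strichartz space. Writing $s=s_c=\frac{n}{2}-\frac{4}{\kappa-1}$, I set $p:=\frac{(n+2)(\kappa-1)}{4}$, so that $\frac{2}{p}+\frac{n}{p}=\frac{4}{\kappa-1}=\frac{n}{2}-s$ and $(p,p,s)$ is a beam-admissible triple. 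The hypotheses ($\kappa>\frac{8}{n}+1$ when $n>3$, resp. $\kappa>5$ when $n=3$) force $s>0$ and $p>2$, which is exactly what is needed for $(p,p,s)$ to be admissible and non-endpoint; the upper bound $\kappa\le\frac{n+4}{n-4}$ (vacuous for $n=3$) is equivalent to $s\le 2$. I will carry out the iteration in $X_T:=\{u:\|u\|_{L^{p}(S_T)}<\infty\}$ (on closed balls therein) and recover the $C([0,T];\dot H^{s}\times\dot H^{s-2})$ regularity afterwards from the Strichartz estimate; the resulting $u$ solves the integral equation, hence is a weak solution of (1.1).

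The central input is Theorem~2.3, applied with forcing term $F=F_\kappa(u)$. I choose the Schr\"odinger-admissible pair $(a,b)$ determined by $\frac{1}{a}=\frac{n-2-4/(\kappa-1)}{n+2}$ (together with $\frac{2}{a}+\frac{n}{b}=\frac{n}{2}$), which yields $a'=\tilde b=p/\kappa$; one checks that the compatibility relation $\frac{2}{a'}+\frac{n}{\tilde b}-4=\frac{n}{2}-s$ holds automatically once $(a,b)$ is Schr\"odinger-admissible and $\tilde b=\frac{nb'}{n+(2-s)b'}$, and that $\tilde b>1$ (this uses $\kappa>\frac{n+2}{n-2}$, which again follows from the stated hypotheses, and for $n=3$ is exactly $\kappa>5$). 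With these choices the nonlinear term is handled by a bare H\"older inequality: since $|F_\kappa(u)|=|u|^{\kappa}$ and $\kappa a'=\kappa\tilde b=p$,
\begin{equation*}
\|F_\kappa(u)\|_{L^{a'}_I L^{\tilde b}}=\|u\|_{L^{p}(S_T)}^{\kappa},
\end{equation*}
and, using the pointwise bound $\bigl||u|^{\kappa-1}u-|v|^{\kappa-1}v\bigr|\lesssim(|u|^{\kappa-1}+|v|^{\kappa-1})|u-v|$,
\begin{equation*}
\|F_\kappa(u)-F_\kappa(v)\|_{L^{a'}_I L^{\tilde b}}\lesssim\bigl(\|u\|_{L^{p}(S_T)}^{\kappa-1}+\|v\|_{L^{p}(S_T)}^{\kappa-1}\bigr)\|u-v\|_{L^{p}(S_T)}.
\end{equation*}
Feeding these into estimate (2.3) of Theorem~2.3 gives $\|\Phi(u)\|_{L^{p}(S_T)}\le C_0(\|f\|_{\dot H^{s}}+\|g\|_{\dot H^{s-2}})+C_1\|u\|_{L^{p}(S_T)}^{\kappa}$, and the matching Lipschitz bound for $\Phi(u)-\Phi(v)$.

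For local well-posedness I would argue in the style of Cazenave--Weissler: the free evolution $u_{\mathrm{lin}}=\cos(t\triangle)f+\frac{\sin(t\triangle)}{\triangle}g$ satisfies $\|u_{\mathrm{lin}}\|_{L^{p}([0,\infty)\times\mathbb R^{n})}<\infty$ by Theorem~2.3 with $I=[0,\infty)$, hence $\|u_{\mathrm{lin}}\|_{L^{p}(S_T)}\to0$ as $T\to0^{+}$ by dominated convergence. Fix $\delta>0$ small enough that $C_1(2\delta)^{\kappa-1}\le\frac12$ and $\delta+C_1(2\delta)^{\kappa}\le2\delta$, then pick $T$ with $\|u_{\mathrm{lin}}\|_{L^{p}(S_T)}\le\delta$; on $\{\|u\|_{L^{p}(S_T)}\le2\delta\}$ the map $\Phi$ is a contraction, and its fixed point is the solution. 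Since this is a critical problem, $T$ depends on the profile of $(f,g)$, not only on its norm. For the small-data global statement, if $\|f\|_{\dot H^{s}}+\|g\|_{\dot H^{s-2}}<\epsilon(\kappa)$ with $C_0\epsilon(\kappa)=\delta$, the same contraction runs on $I=[0,\infty)$ directly, so one may take $T=\infty$. Finally, applying (2.3) to $u$ itself gives $u\in L^{\infty}_I\dot H^{s}$ and $\partial_t u\in L^{\infty}_I\dot H^{s-2}$; continuity in time follows by the standard limiting argument (approximating $F_\kappa(u)$ in $L^{a'}_I L^{\tilde b}$ by smooth functions and using strong continuity of the propagators), and uniqueness in the class $\{u\in L^{p}(S_T)\}$ by partitioning $[0,T]$ into finitely many subintervals on which $\|u\|_{L^{p}}$ is small and iterating the difference estimate.

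The main obstacle, and the only place the precise hypotheses on $(n,\kappa)$ enter, is the exponent bookkeeping in the second paragraph: one must check that the single choice $a'=\tilde b=p/\kappa$ simultaneously yields a genuine Schr\"odinger-admissible pair $(a,b)$ (so that $2<a,b<\infty$), keeps $\tilde b>1$, and lands in the range $0\le s=s_c\le2$ where (2.3) is available. These three requirements are precisely equivalent to $\frac{8}{n}+1<\kappa\le\frac{n+4}{n-4}$ for $n>3$ and to $\kappa>5$ for $n=3$. Beyond this, the scheme is the standard critical contraction/continuation argument; in particular there is no loss of derivatives in this energy-subcritical/critical regime, so no fractional chain rule is needed here (unlike the energy-supercritical case treated later in the section).
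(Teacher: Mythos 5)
Your proposal is correct and follows essentially the same route as the paper: the paper runs the identical argument as an explicit Picard iteration with the contraction quantities $A_m(T)$, $B_m(T)$, using precisely the critical Strichartz estimate (2.3) with $p=r=\frac{(n+2)(\kappa-1)}{4}$ and forcing measured in $L^{\frac{(n+2)(\kappa-1)}{4\kappa}}(S_T)$, together with H\"older and the pointwise bound $V_\kappa(u,v)=O(|u|^{\kappa-1}+|v|^{\kappa-1})$, and it handles local existence and small-data global existence exactly as you do (dominated convergence to make the free evolution small on $S_T$, smallness of the data otherwise). Your explicit verification of the exponent bookkeeping for $a'=\tilde b=p/\kappa$ is a welcome addition that the paper leaves implicit.
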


Because  the main step  is to show that the nonlinear mapping $u_m \rightarrow u_{m+1}$ is a contraction for the  proof of  the existence, we start with the following lemma.

\begin{lemma}
For given $ n>2, \frac{8}{n}+1< \kappa\leq\frac{n+4}{n-4}, s=\frac{n}{2}-\frac{4}{\kappa-1}$, then for $T>0$  if we set,
\begin{equation}
A_m(T)=\|u_m\|_{ L^{\frac{(n+2)(\kappa-1)}{4}}(S_T)} 
\quad and \quad  B_m(T)=\|u_m-u_{m-1}\|_{ L^{\frac{(n+2)(\kappa-1)}{4}}(S_T)},
\end{equation}
there is an $\epsilon_0 > 0$ so that $2A_0(T)\leq \epsilon_0$ and if $ m=0,1,2,...$
\begin{equation}
A_m(T)\leq 2A_0(T),\quad B_{m+1}(T)\leq\frac{1}{2}B_m(T).
\end{equation}
\end{lemma}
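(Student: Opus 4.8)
The plan is to run the entire Picard scheme inside the single scaling-critical Strichartz space $L^q(S_T)$ with $q=\frac{(n+2)(\kappa-1)}{4}$, using the Strichartz estimate (2.3). The first step is exponent bookkeeping. With $s=s_c=\frac n2-\frac4{\kappa-1}$ one has $\frac2q+\frac nq=\frac{n+2}{q}=\frac4{\kappa-1}=\frac n2-s$, so $(q,q,s)$ is a beam-admissible triple; the hypothesis $\kappa>\frac8n+1$ forces $q>2$, and $\kappa\le\frac{n+4}{n-4}$ (no upper bound being needed when $n=3,4$) gives $0<s\le2$, so that (2.3) applies. One then checks that the choice $a'=\tilde b=q/\kappa$ is realized by a Schr\"{o}dinger-admissible pair $(a,b)$: the scaling identity $\frac2{a'}+\frac n{\tilde b}-4=\frac4{\kappa-1}$ and the relation $\frac1{\tilde b}=\frac1{b'}+\frac{2-s}{n}$ both hold for this choice, and the remaining admissibility inequalities, including $\tilde b>1$, follow from the stated hypotheses on $n$ and $\kappa$. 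With these exponents the nonlinearity costs nothing beyond H\"{o}lder: since $|F_\kappa(u)|=|u|^{\kappa}$ and $a'=\tilde b=q/\kappa$,
$$\|F_\kappa(u)\|_{L^{a'}_I L^{\tilde b}}=\|u\|_{L^q(S_T)}^{\kappa},$$
while the pointwise bound $|F_\kappa(z)-F_\kappa(w)|\lesssim(|z|^{\kappa-1}+|w|^{\kappa-1})|z-w|$ together with the H\"{o}lder splitting $\frac\kappa q=\frac{\kappa-1}{q}+\frac1q$ in both the space and the time variable gives
$$\|F_\kappa(u)-F_\kappa(v)\|_{L^{a'}_I L^{\tilde b}}\lesssim\big(\|u\|_{L^q(S_T)}^{\kappa-1}+\|v\|_{L^q(S_T)}^{\kappa-1}\big)\|u-v\|_{L^q(S_T)}.$$

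With those two inequalities in hand the induction is mechanical. Write $u_m=u_0+w_m$, where $u_0=\cos(t\triangle)f+\frac{\sin(t\triangle)}{\triangle}g$ is the free evolution, so that $A_0(T)=\|u_0\|_{L^q(S_T)}\lesssim\|f\|_{\dot{H}^s}+\|g\|_{\dot{H}^{s-2}}<\infty$ by Theorem 2.3, and $w_m$, together with each difference $u_{k+1}-u_k$, solves the linear beam equation with zero Cauchy data and forcing $F_\kappa(u_{m-1})$, respectively $F_\kappa(u_k)-F_\kappa(u_{k-1})$ (with the convention $F_\kappa(u_{-1})=0$). Applying (2.3) to these, with a constant $C$ independent of $T$, and inserting the two H\"{o}lder bounds yields, for all $m\ge0$,
$$A_{m+1}(T)\le A_0(T)+C\,A_m(T)^{\kappa},\qquad B_{m+2}(T)\le C\big(A_{m+1}(T)^{\kappa-1}+A_m(T)^{\kappa-1}\big)B_{m+1}(T),$$
together with $B_1(T)\le C\,A_0(T)^{\kappa-1}B_0(T)$ and $B_0(T)=A_0(T)$. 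Now fix $\epsilon_0>0$, depending only on $C$ and $\kappa$, so small that $C\,2^{\kappa}(\epsilon_0/2)^{\kappa-1}\le\frac12$, and suppose $2A_0(T)\le\epsilon_0$ (which in the proof of Theorem 3.1 is arranged by shrinking $T$, using that $u_0\in L^q([0,\infty)\times\mathbb{R}^n)$, or by taking the data norm small). An elementary induction then closes (3.5): if $A_j(T)\le2A_0(T)$ for all $j\le m$, then $A_{m+1}(T)\le A_0(T)+C\,2^{\kappa}A_0(T)^{\kappa-1}A_0(T)\le2A_0(T)$, and then $B_{m+2}(T)\le C\,2^{\kappa}A_0(T)^{\kappa-1}B_{m+1}(T)\le\frac12 B_{m+1}(T)$, the base case $m=0$ being identical.

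A final remark makes the whole argument legitimate: by the same Strichartz estimate each iterate lies in $L^q(S_T)\cap C([0,T];\dot{H}^s)$ with $\partial_t u_m\in C([0,T];\dot{H}^{s-2})$ as soon as $F_\kappa(u_{m-1})\in L^{a'}_I L^{\tilde b}$, and the identity $\|F_\kappa(u)\|_{L^{a'}_I L^{\tilde b}}=\|u\|_{L^q(S_T)}^{\kappa}$ propagates this finiteness from $u_0$ to every $u_m$, so that $A_m(T)$ and $B_m(T)$ are finite and the recursion is meaningful. The only genuinely delicate step is the exponent verification of the first paragraph: it is precisely the criticality $s=s_c$ that forces the identical Lebesgue space $L^q$ on both sides of the Strichartz inequality, and one must confirm that the auxiliary pair $(a,b)$ satisfies every Schr\"{o}dinger-admissibility constraint throughout the stated range $\frac8n+1<\kappa\le\frac{n+4}{n-4}$ (and in the separately treated case $n=3$, $\kappa>5$). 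Once that is settled the nonlinear estimates are routine H\"{o}lder and the contraction is automatic.
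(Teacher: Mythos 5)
Your proposal is correct and follows essentially the same route as the paper: the diagonal critical Strichartz estimate in $L^{q}(S_T)$ with $q=\frac{(n+2)(\kappa-1)}{4}$, the H\"older splitting $\frac{\kappa}{q}=\frac{\kappa-1}{q}+\frac{1}{q}$ applied to the pointwise bound on $F_\kappa(u)-F_\kappa(v)$ (which is the paper's estimate on $V_\kappa(u,v)$ in $L^{(n+2)/4}$), and the same induction closing the bounds $A_m\le 2A_0$ and $B_{m+1}\le\frac12 B_m$. Your explicit verification of the beam-admissibility of $(q,q,s)$ and of the dual pair is a welcome addition that the paper leaves implicit.
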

\begin{proof}
Suppose  that $ u$ is a weak solution of the nonlinear equation (1.1), by Theorem 2.3,  if $0\leq s\leq 2$, for every $T>0,$ we have the following Strichartz estimate 
\begin{multline}
\|u\|_{ L^{\frac{(n+2)(\kappa-1)}{4}}(S_T)}+\|u(T,\cdot)\|_{{\dot{H}}^s(\mathbb{R}^n)}+\|\partial_t u(T,\cdot)\|_{{\dot{H}}^{s-2}(\mathbb{R}^n)}\\
\lesssim\|f\|_{\dot{H}^s}+\|g\|_{\dot{H}^{s-2}}+\|F\| _{ L^{\frac{(n+2)(\kappa-1)}{4\kappa}}(S_T)}.
\end{multline}
Then if we write 
$$(\partial_t^2+\triangle^2)(u_{m+1}-u_{j+1})=V_\kappa(u_m,u_j)(u_m-u_j)$$
with
$$V_k(u,v)=\frac{F_\kappa(u)-F_\kappa(v)}{u-v},$$
then by (3.6), the  H$\text{\"{o}}$lder inequality and the fact that $V_\kappa(u_m,u_j)=O(|u_m|^{\kappa-1}+|u_j|^{\kappa-1}),$
%\begin{align*}
$$\|u_{m+1}-u_{j+1}\|_{ L^{\frac{(n+2)(\kappa-1)}{4}}(S_T)}
\leq C\|V_\kappa(u_m,u_j)(u_m-u_j)\|_{ L^{\frac{(n+2)(\kappa-1)}{4\kappa}}(S_T)}$$
$\leq C'\|V_\kappa(u_m,u_j)\|_{ L^{\frac{(n+2)}{4}}(S_T)}\|u_m-u_j\|_{ L^{\frac{(n+2)(\kappa-1)}{4}}(S_T)}$\\
%\end{align*}
$\leq C''(\|u_m\|^{\kappa-1}_{L^{\frac{(n+2)(\kappa-1)}{4}}(S_T)}
+\|u_j\|^{\kappa-1}_{L^{\frac{(n+2)(\kappa-1)}{4}}(S_T)})\|u_m-u_j\|_{L^{\frac{(n+2)(\kappa-1)}{4}}(S_T)}.$\\
Taking $j=-1,$ we have 
\begin{equation}
\|u_{m+1}-u_{0}\|_{L^{\frac{(n+2)(\kappa-1)}{4}}(S_T)}\leq C'' \|u_m\|^{\kappa}_{L^{\frac{(n+2)(\kappa-1)}{4}}(S_T)}.
\end{equation}

Thus if $\epsilon_0^{\kappa-1}$ is small enough so that $\epsilon_0^{\kappa-1}C''<\frac{1}{4}$ and if we assume that $A_m(T)\leq2A_0(T)$ then by (3.7) we get 
$$ A_{m+1}(T)\leq A_0(T)+\frac{1}{2}A_m(T),$$
by induction we get the result for Am(T). Taking $j=m-1$ gives $B_{m+1}(T)\leq\frac{1}{2}B_m(T)$.
\end{proof}

\begin{proof}[Proof of Theorem 3.1]
First of all, by (3.6) we have,
$$\|u_{0}\|_{L^{\frac{(n+2)(\kappa-1)}{4}}(S_T)}\leq C (\|f\|_{\dot H^s}+\|g\|_{\dot H^{s-2}}) \quad \text {for all} \quad T>0.$$
Therefore if the right side is less than $\frac{\epsilon_0}{2}$ for all $T$ take $T=\infty$. Otherwise the dominated convergence theorem furnishes $T$ sufficiently small such that
$$2\|u_{0}\|_{L^{\frac{(n+2)(\kappa-1)}{4}}(S_T)}=2A_0\leq \epsilon_0.$$
Since $B_0(T)=A_0(T)$, using the lemma, it follows that $u_m$ converges to a limit $u\in L^{\frac{(n+2)(\kappa-1)}{4}}(S_T)$ and hence in the sense of distributions. Since 
\begin{multline}
\|F_\kappa(u_{m+1})-F_\kappa(u_{m})\|_{L^{\frac{(n+2)(\kappa-1)}{4\kappa}}(S_T)}\\
\leq C'\|V_\kappa(u_{m+1},u_m)\|_{L^{\frac{(n+2)}{4}}(S_T)}\|u_{m+1}-u_m\|_{L^{\frac{(n+2)(\kappa-1)}{4}}(S_T)}
\end{multline}
By the lemma, we have $F_\kappa(u_m)\rightarrow F_\kappa(u)$ in $L^{\frac{(n+2)(\kappa-1)}{4\kappa}}(S_T)$.
Meanwhile, if we assume the initial  data belong to $C_0^\infty,$  by (3.5) and (3.6), $(u_m,\partial_t u_m)$ must be a Cauchy sequence in $C([0,T]; \dot H^s \times \dot H^{s-2})$ converging to $(u, v)$ for some $v$. An examination of the Duhamel formula reveals that $v=\partial_t u$, where 
$$ u(t,\cdot)=\cos(t\triangle)f +\frac{\sin(t\triangle)}{\triangle}g+\int_{0}^{t}\frac{\sin((t-s)\triangle)}{\triangle}F(u(s))ds.$$
Hence the proof of existence part of Theorem 3.1 with $\kappa\leq\frac{n+4}{n-4}$ is completed.

To prove the uniqueness, we first define $w(t,\cdot)=u_1(t,\cdot)-u_2(t,\cdot)$, where $u_1(t,\cdot),$ $u_2(t,\cdot)$ are two solutions of (1.1) satisfying (3.3), then $w(t,\cdot)$ is the solution of $(\partial_t^2+\triangle^2)w(t,\cdot)=V_\kappa(u_1(t,\cdot),u_2(t,\cdot))w(t,\cdot)$ with zero inital data, then we  consider the following equation
\begin{equation}
(\partial_t^2+\triangle^2)w(t,\cdot)=V_\kappa(u_1,u_2) w(t,\cdot),
\end{equation}
where $V_\kappa(u_1,u_2)\in L^{\frac{(n+2)}{4}}(S_T)$.
Let $T$ be the largest number such that 
$$\|V_\kappa(u_1,u_2)\|_{ L^{\frac{(n+2)}{4}}(S_T)}<\epsilon_s, \qquad \text{for}, \qquad  t\leq T.$$
Where $\epsilon_s$ is a universal constant to be determined. In particular, for some constant $C$, if $\epsilon_s\leq C^{-1}/2$, then by (3.6) and H$\text{\"{o}}$lder inequality
$$\|w\|_{ L^{\frac{(n+2)(\kappa-1)}{4}}(S_T)}\leq \frac{1}{2}\|w\|_{L^{\frac{(n+2)(\kappa-1)}{4}}(S_T)}.$$
Which implies $ w(t,\cdot)=0$, this implies uniqueness of solutions $u_1(t,\cdot)=u_2(t,\cdot)\in L^{\frac{(n+2)(\kappa-1)}{4}}(S_T).$ 
\end{proof}

 For the ``energy supercritical" range $\kappa>\frac{n+4}{n-4}$, we have two cases to discuss: $(f, g)\in(\dot{H}^s\times\dot{H}^{s-2})$ and  $(f, g)\in({H}^s\times{H}^{s-2})$.

 (1) Small  initial data  $f\in\dot{H}^s, g\in\dot{H}^{s-2},$ we have the following 
\begin{theorem}
Set $s=\frac{n}{2}-\frac{4}{\kappa-1}$ and assume $ n>4$. Suppose there exists an $l\in\mathbb{N},  l\geq1 $ with $\frac{n}{2}-\frac{4}{\kappa-1}-2\leq l\leq\kappa-1$,
then there is a $T>0$ a unique (weak) solution of the nonlinear beam equation (3.1) satisfying 
\begin{equation}
(u, \partial_t u)\in C([0,T]; {\dot{H}}^s\times {\dot{H}}^{s-2}) \quad and \quad  u\in L^{\frac{(n+2)(\kappa-1)}{4}}(S_T).
\end{equation}
Moreover, there is $\epsilon(\kappa)>0$,so that if 
$$\|f\|_{\dot{H}^s}+\|g\|_{\dot{H}^{s-2}}<\epsilon(\kappa),$$
then one can take $T=\infty$.
\end{theorem}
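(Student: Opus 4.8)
The plan is to repeat, essentially verbatim, the Picard iteration of Theorem 3.1 and Lemma 3.2; the only genuinely new point is that here $s=s_c=\frac n2-\frac4{\kappa-1}>2$, so the forcing term in the Strichartz estimate of Theorem 2.3 lives in the positively smooth space $\dot W^{s-2,b'}$ and cannot be collapsed to a plain Lebesgue norm as in (2.3). I would iterate in
$$X_T=\Bigl\{u:\ \|u\|_{X_T}:=\|u\|_{L^p_I\dot W^{s,q}}<\infty\Bigr\},\qquad I=[0,T],$$
where $(p,q)$ is the (non-endpoint) Schr\"odinger-admissible pair with $p=\frac{(n+2)(\kappa-1)}{4}$, chosen so that, on one side, the refined estimate (2.4) yields $\|u\|_{X_T}\lesssim\|f\|_{\dot H^s}+\|g\|_{\dot H^{s-2}}+\|F\|_{L^{a'}_I\dot W^{s-2,b'}}$ for the conjugate pair $(a',b')$, and, on the other, the Sobolev embedding $\dot W^{s,q}\hookrightarrow L^{\frac{(n+2)(\kappa-1)}{4}}$ used in the proof of Theorem 2.3 gives $\|u\|_{L^{\frac{(n+2)(\kappa-1)}{4}}(S_T)}\lesssim\|u\|_{X_T}$, so that every quantity appearing in the statement is governed by $\|u\|_{X_T}$. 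The standing hypothesis $n>4$ is what makes the regime $s_c>2$ nonempty and lets one choose such exponents with $q\ge2$ and $q<\frac n2$.

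The heart of the proof is the nonlinear estimate
$$\|F_\kappa(u)\|_{L^{a'}_I\dot W^{s-2,b'}}=\bigl\||D|^{s-2}\bigl(|u|^{\kappa-1}u\bigr)\bigr\|_{L^{a'}_IL^{b'}}\lesssim\|u\|_{X_T}^{\kappa},$$
together with the companion difference estimate, in which $|u|^{\kappa-1}u$ is replaced by $V_\kappa(u_m,u_j)(u_m-u_j)$ and one uses $V_\kappa(u,v)=O(|u|^{\kappa-1}+|v|^{\kappa-1})$. To prove these I would combine the fractional Leibniz rule with the fractional chain rule (of Christ--Weinstein type, iterated to smoothness order $s-2$) for the map $z\mapsto|z|^{\kappa-1}z$: the conditions $s-2\le l\le\kappa-1$ ensure simultaneously that the $s-2$ derivatives can be realized and that this map --- which satisfies $|F_\kappa^{(m)}(z)|\lesssim|z|^{\kappa-m}$ --- lies in $C^l$ with a H\"older top derivative, so that the composition estimate applies. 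After distributing derivatives one is left with terms of the schematic form $\|u\|_{L^{r_0}}^{\kappa-1}\||D|^{s-2}u\|_{L^{r_1}}$ with $\frac1{b'}=\frac{\kappa-1}{r_0}+\frac1{r_1}$; taking $r_0=\frac{(n+2)(\kappa-1)}{4}$ and $r_1$ accordingly, the Sobolev embeddings $\dot W^{s,q}\hookrightarrow L^{r_0}$ and $\dot W^{s,q}\hookrightarrow\dot W^{s-2,r_1}$ (both legitimate in the supercritical range) together with H\"older in $t$ against the admissible pair $(a,b)$ close the estimate, and because $s=s_c$ the H\"older/Sobolev exponent bookkeeping is automatically consistent --- this is the scaling-critical estimate.

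Granting these two estimates, the rest is the bookkeeping of Lemma 3.2 and of the proof of Theorem 3.1, now with $A_m(T)=\|u_m\|_{X_T}$ and $B_m(T)=\|u_m-u_{m-1}\|_{X_T}$: from (2.4) and the nonlinear estimates, $A_{m+1}(T)\le A_0(T)+\frac12A_m(T)$ and $B_{m+1}(T)\le\frac12B_m(T)$ as soon as $A_0(T)$ lies below a threshold $\epsilon_0(\kappa)$, hence $A_m(T)\le2A_0(T)$ and $B_{m+1}(T)\le\frac12B_m(T)$ for every $m$. For arbitrary data this smallness holds on a short interval, since $\|u_0\|_{X_T}\to0$ as $T\to0$ by the dominated convergence argument of Theorem 3.1; under $\|f\|_{\dot H^s}+\|g\|_{\dot H^{s-2}}<\epsilon(\kappa)$ one instead has $A_0(T)\lesssim\|f\|_{\dot H^s}+\|g\|_{\dot H^{s-2}}$ uniformly in $T$, so $T=\infty$ is allowed. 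Then $u_m$ converges in $X_T$, hence in the sense of distributions, to a limit $u$; the Duhamel formula identifies $u$ with the weak solution of (1.1), $F_\kappa(u_m)\to F_\kappa(u)$ as in (3.8), and the energy part of (2.2) (after a density argument as in Theorem 3.1) upgrades this to $(u,\partial_t u)\in C([0,T];\dot H^s\times\dot H^{s-2})$. Uniqueness follows exactly as in Theorem 3.1, by running the difference estimate on the maximal interval on which $\|V_\kappa(u_1,u_2)\|_{L^{\frac{n+2}{4}}(S_T)}$ stays below a universal constant.

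The step I expect to be the main obstacle is the nonlinear estimate when $\kappa$ is not an odd integer: then $|u|^{\kappa-1}u$ is only finitely smooth, and one must be sure that the number $s-2$ of derivatives forced by the Strichartz estimate does not exceed the genuine smoothness $C^l$, $l\le\kappa-1$, of the nonlinearity --- this is precisely what the chain $\frac n2-\frac4{\kappa-1}-2\le l\le\kappa-1$ encodes, and it can genuinely fail (for instance when $n$ is large and $\kappa$ is only moderately larger than $\frac{n+4}{n-4}$), which is why the theorem carries this hypothesis rather than covering all supercritical $\kappa$. Everything else is a routine adaptation of Theorem 3.1; in particular, once $s=s_c$ is imposed, no constraint on the admissible exponents beyond $n>4$ is required.
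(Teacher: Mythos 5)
Your strategy is in substance the same as the paper's: Picard iteration at critical regularity $s=s_c>2$, a Strichartz estimate that measures the forcing in $\dot W^{s-2,b'}$, and a fractional chain/Leibniz rule (this is the paper's Lemma 3.6 and Kato's Lemma A3) to place $s-2$ derivatives on $|u|^{\kappa-1}u$, with the hypothesis $s-2\le l\le\kappa-1$ playing exactly the role you assign it. The one place where your write-up does not close as stated is the choice of a \emph{single} iteration norm $X_T=L^p_I\dot W^{s,q}$ with $p=\frac{(n+2)(\kappa-1)}{4}$. After the Leibniz rule the schematic term is $\|u(t)\|_{L^{r_0}}^{\kappa-1}\,\||D|^{s-2}u(t)\|_{L^{r_1}}$, and if you bound both factors by $\|u(t)\|_{\dot W^{s,q}}$ at fixed $t$, the time integration forces $\|u\|^{\kappa}_{L^{\kappa a'}_I\dot W^{s,q}}$ on the right; matching $\kappa a'=p$ requires $a'=\frac{(n+2)(\kappa-1)}{4\kappa}$, and the dual pair $(a,b)$ is Schr\"odinger-admissible (i.e.\ $a'\le 2$) only when $(n+2)(\kappa-1)\le 8\kappa$, which fails for large $\kappa$ once $n>6$. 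Passing from $L^p_t$ to $L^{\kappa a'}_t$ by H\"older instead costs a positive power of $T$, destroying the scale-invariance and the global small-data claim. The paper avoids this by carrying \emph{two} space--time norms in the iteration quantity $A_m(T)$, namely $\|u_m\|_{L^{\frac{(n+2)(\kappa-1)}{4}}(S_T)}$ (no derivatives, large time exponent) and $\|(\sqrt{-\triangle})^{s-2}u_m\|_{L^{\frac{2(n+2)}{n-4}}(S_T)}$ (the $\dot H^2$-level pair applied to $v=(\sqrt{-\triangle})^{s-2}u$), so that the time H\"older $\frac{n+4}{2(n+2)}=\frac{4}{n+2}+\frac{n-4}{2(n+2)}$ closes exactly in Kato's inequality $\|(\sqrt{-\triangle})^{s-2}F_\kappa(u)\|_{L^{\frac{2(n+2)}{n+4}}}\lesssim\|u\|^{\kappa-1}_{L^{\frac{(n+2)(\kappa-1)}{4}}}\|(\sqrt{-\triangle})^{s-2}u\|_{L^{\frac{2(n+2)}{n-4}}}$; both norms are simultaneously controlled by the Strichartz estimate of Theorem 3.5. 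Enlarging your $X_T$ to include this second norm (and running the contraction for $B_m$ in the $\dot W^{s-2,\frac{2(n+2)}{n-4}}$ norm, as in Lemma 3.7) repairs the argument; the remainder of your proposal --- convergence, identification via Duhamel, continuity of $(u,\partial_tu)$, and uniqueness --- then matches the paper.
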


 To show that the nonlinear mapping $u_m \rightarrow u_{m+1}$ is a contraction for the  proof of  the existence of this theorem  requires a different argument from Lemma 3.2,  we have to use  a specific inequality which comes from Strichartz estimates as the following:
\begin{theorem}
Suppose that u is a solution of (1.1). Then, 
$$\|u\|_{L^{\frac{(n+2)(\kappa-1)}{4}}(S_T)}+\|(\sqrt{-\triangle })^{s-2}u\|_{L^{\frac{2(n+2)}{n-4}}(S_T)}+\|u(T,\cdot)\|_{{\dot{H}}^s(\mathbb{R}^n)}+\|\partial_t u(T,\cdot)\|_{{\dot{H}}^{s-2}(\mathbb{R}^n)}$$
\begin{equation}
\lesssim\|f\|_{\dot{H}^s}+\|g\|_{\dot{H}^{s-2}}+\|(\sqrt{-\triangle })^{s-2}F\| _{L^{\frac{2(n+2)}{n+4}}(S_T)},
\end{equation}
with $S_T=[0,T]\times\mathbb{R}^n$.
\end {theorem}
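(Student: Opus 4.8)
The plan is to deduce the estimate from two applications of Theorem 2.3, with no further ingredients. First I would fix the exponents: set $p_1=r_1=\frac{(n+2)(\kappa-1)}{4}$, $p_2=r_2=\frac{2(n+2)}{n-4}$, and on the forcing side take the Schr\"{o}dinger-admissible pair $(a,b)=\bigl(\frac{2(n+2)}{n},\frac{2(n+2)}{n}\bigr)$, whose conjugate is $(a',b')=\bigl(\frac{2(n+2)}{n+4},\frac{2(n+2)}{n+4}\bigr)$. A short computation gives $\frac2{p_1}+\frac n{r_1}=\frac4{\kappa-1}=\frac n2-s$ with $s=\frac n2-\frac4{\kappa-1}$, so $(p_1,r_1,s)$ is a beam-admissible triple; likewise $\frac2{p_2}+\frac n{r_2}=\frac{n-4}2=\frac n2-2$, so $(p_2,r_2,2)$ is a beam-admissible triple; and $\frac2a+\frac nb=\frac n2$, so $(a,b)$ is Schr\"{o}dinger-admissible. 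The range constraints are checked in passing: under $\kappa>\frac{n+4}{n-4}$ and $n>4$ one has $s>2>0$ and $p_1=r_1>\frac{2(n+2)}{n-4}>2$, while $p_2=r_2>2$ and $2<a=b<\infty$ automatically. Note finally that $\|(\sqrt{-\triangle})^{s-2}F\|_{L^{2(n+2)/(n+4)}(S_T)}=\|F\|_{L^{a'}_I\dot W^{s-2,b'}}$.

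Next I would apply estimate (2.2) of Theorem 2.3 directly to $u$ with the triple $(p_1,r_1,s)$ and the pair $(a,b)$, obtaining
\[
\|u\|_{L^{p_1}_IL^{r_1}}+\|u\|_{L^\infty_I\dot H^s}+\|\partial_tu\|_{L^\infty_I\dot H^{s-2}}\lesssim\|f\|_{\dot H^s}+\|g\|_{\dot H^{s-2}}+\|(\sqrt{-\triangle})^{s-2}F\|_{L^{2(n+2)/(n+4)}(S_T)};
\]
bounding the two $L^\infty_I$-norms by their values at $t=T$ controls $\|u(T,\cdot)\|_{\dot H^s}$ and $\|\partial_tu(T,\cdot)\|_{\dot H^{s-2}}$, so the first, third and fourth terms of the claim are handled.

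For the remaining term I would set $v:=(\sqrt{-\triangle})^{s-2}u=|D|^{s-2}u$. Since $|D|^{s-2}$ is a Fourier multiplier it commutes with $\partial_t^2+\triangle^2$, so $v$ solves (2.1) with $v|_{t=0}=|D|^{s-2}f$, $\partial_tv|_{t=0}=|D|^{s-2}g$ and forcing $|D|^{s-2}F$. Applying Theorem 2.3 to $v$ with the triple $(p_2,r_2,2)$ and the same pair $(a,b)$ yields
\[
\|(\sqrt{-\triangle})^{s-2}u\|_{L^{p_2}_IL^{r_2}}=\|v\|_{L^{p_2}_IL^{r_2}}\lesssim\||D|^{s-2}f\|_{\dot H^2}+\||D|^{s-2}g\|_{L^2}+\||D|^{s-2}F\|_{L^{a'}_IL^{b'}},
\]
and the homogeneous-Sobolev identities $\||D|^{s-2}f\|_{\dot H^2}=\|f\|_{\dot H^s}$, $\||D|^{s-2}g\|_{L^2}=\|g\|_{\dot H^{s-2}}$, together with $\||D|^{s-2}F\|_{L^{a'}_IL^{b'}}=\|(\sqrt{-\triangle})^{s-2}F\|_{L^{2(n+2)/(n+4)}(S_T)}$, put the right-hand side into the desired form. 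Adding the two displays completes the proof.

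I do not expect a genuine obstacle: the analytic content is entirely contained in Theorem 2.3, and what remains is exponent bookkeeping. The one point requiring care is that a \emph{single} Schr\"{o}dinger-admissible pair $(a,b)$ must serve both applications on the forcing side, which is exactly what forces the choice $a=b=\frac{2(n+2)}{n}$ (equivalently $a'=b'=\frac{2(n+2)}{n+4}$); and that since $s>2$ in this regime one must use the general estimate (2.2) rather than its restricted form (2.3), valid only for $0\le s\le2$. An alternative to introducing $v$ is to write $\|(\sqrt{-\triangle})^{s-2}u\|_{L^{p_2}_IL^{r_2}}=\|u\|_{L^{p_2}_I\dot W^{s-2,r_2}}\lesssim\|u\|_{L^{p_2}_I\dot W^{s,q_2}}$ by Sobolev embedding with $\frac1{q_2}=\frac1{r_2}+\frac2n$, and then invoke the Cordero--Zucco estimate (2.4) with the Schr\"{o}dinger-admissible pair $(p_2,q_2)$ and the pair $(a,b)$ above; one verifies $\frac2{p_2}+\frac n{q_2}=\frac n2$, so this route is equally legitimate.
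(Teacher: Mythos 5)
Your proof is correct and follows essentially the same route as the paper: apply estimate (2.2) of Theorem 2.3 to $u$ with $p=r=\frac{(n+2)(\kappa-1)}{4}$ and $a'=b'=\frac{2(n+2)}{n+4}$ for the first, third and fourth terms, and apply Theorem 2.3 with $s=2$ to $v=(\sqrt{-\triangle})^{s-2}u$ (which solves the beam equation with data $|D|^{s-2}f\in\dot H^2$, $|D|^{s-2}g\in L^2$ and forcing $|D|^{s-2}F$) for the remaining term. Your explicit verification of the admissibility conditions is a welcome addition, but the argument is the same.
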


\begin{proof}
We assume $v=(\sqrt{-\triangle })^{s-2}u$, then 
$$(\partial_t^2+\triangle^2)v=(\sqrt{-\triangle })^{s-2}(\partial_t^2+\triangle^2)u=(\sqrt{-\triangle })^{s-2}F,$$
and 
$$v|_{t=0}=(\sqrt{-\triangle })^{s-2}f\in\dot{H}^2, \quad v_t|_{t=0}=(\sqrt{-\triangle })^{s-2}g\in L^2.$$
By (2.3) with $s=2$,
we have,
\begin{equation}
\|(\sqrt{-\triangle })^{s-2}u\|_{L^{\frac{2(n+2)}{n-4}}(S_T)}\lesssim\|f\|_{\dot{H}^s}+\|g\|_{\dot{H}^{s-2}}+\|(\sqrt{-\triangle })^{s-2}F\| _{L^{\frac{2(n+2)}{n+4}}(S_T)}.
\end{equation}
Choose $p= r= {\frac{(n+2)(\kappa-1)}{4}}$  and $a'=b'=\frac{2(n+2)}{n+4}$ for (2.2), we have
\begin{multline}
\|u\|_{L^{\frac{(n+2)(\kappa-1)}{4}}(S_T)}+\|u(T,\cdot)\|_{{\dot{H}}^s(\mathbb{R}^n)}+\|\partial_t u(T,\cdot)\|_{{\dot{H}}^{s-2}(\mathbb{R}^n)}\\
\lesssim\|f\|_{\dot{H}^s}+\|g\|_{\dot{H}^{s-2}}+\|(\sqrt{-\triangle })^{s-2}F\| _{L^{\frac{2(n+2)}{n+4}}(S_T)}.
\end{multline}
Combining with (3.12) and (3.13), we have (3.11).
\end{proof}

We first introduce fractional chain rule lemma,
\begin{lemma}
Let $F\in C^{l+1}(\mathbb{C}; \mathbb{C}), l\in \mathbb{N}$. Assume that there is $\kappa\geq l$ such that 
$$|\nabla^i F(z)|\leq |z|^{\kappa-i},\quad i=1,2,...,l.$$
If $\kappa>2, 0\leq s\leq l$,  $1<q<r<\infty$ obey the scaling condition $\frac{n}{q}=\frac{n\kappa}{r}-(\kappa-1)s$ then
\begin{equation}
\|F(f)-F(g)\|_{\dot{W}^{s,q}(\mathbb{R}^n)}\lesssim(\|f\|_{\dot{W}^{s,r}(\mathbb{R}^n)}+\|g\|_{\dot{W}^{s,r}(\mathbb{R}^n)})^{\kappa-1}\|f-g\|_{\dot{W}^{s,r}(\mathbb{R}^n)},
\end{equation}
for all $f, g \in \dot{W}^{s,r}$.
\end{lemma}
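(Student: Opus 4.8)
The plan is to reduce the difference estimate to a product estimate plus a ``pure'' composition estimate. Using the fundamental theorem of calculus I would write
$$F(f)-F(g)=(f-g)\,\Psi,\qquad \Psi:=\int_0^1(\nabla F)\bigl(g+\tau(f-g)\bigr)\,d\tau,$$
so that the hypothesis $|\nabla F(z)|\le|z|^{\kappa-1}$ gives the pointwise bound $|\Psi|\lesssim(|f|+|g|)^{\kappa-1}$, while the bounds $|\nabla^i F(z)|\le|z|^{\kappa-i}$ for $1\le i\le l$ together with $F\in C^{l+1}$ let $\Psi$ be treated, for the purposes of fractional differentiation up to order $s\le l$, as a $C^{l}$ function of $(f,g)$ vanishing to order $\kappa-1$ at the origin. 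The role of the coupled hypotheses $F\in C^{l+1}$ and $0\le s\le l$ is precisely to leave enough room for the fractional chain rule at order $s$ applied to $\nabla F$.

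For the base case $s=0$ the scaling condition forces $q=r/\kappa$, and the factorization together with H\"older's inequality ($\tfrac1q=\tfrac1r+\tfrac{\kappa-1}{r}$) immediately gives
$$\|F(f)-F(g)\|_{L^q}\lesssim\bigl\|(|f|+|g|)^{\kappa-1}|f-g|\bigr\|_{L^{r/\kappa}}\lesssim(\|f\|_{L^r}+\|g\|_{L^r})^{\kappa-1}\|f-g\|_{L^r}.$$
For $0<s\le l$ I would apply a fractional Leibniz rule of Kato--Ponce/Coifman--Meyer type to $\|\,|D|^{s}\bigl((f-g)\Psi\bigr)\|_{L^q}$, bounding it by $\|\,|D|^{s}(f-g)\|_{L^r}\|\Psi\|_{L^{\rho_1}}+\|f-g\|_{L^{\rho_2}}\|\,|D|^{s}\Psi\|_{L^{\rho_3}}$ for suitable H\"older exponents. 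The first term is controlled by the pointwise bound on $\Psi$, H\"older, and the Sobolev embedding $\dot W^{s,r}\hookrightarrow L^{(\kappa-1)\rho_1}$: the identity $\tfrac1{(\kappa-1)\rho_1}=\tfrac1r-\tfrac sn$ together with $\tfrac1q=\tfrac1r+\tfrac1{\rho_1}$ is exactly the scaling hypothesis $\tfrac nq=\tfrac{n\kappa}r-(\kappa-1)s$, so the exponents match. For the second term one chooses $\rho_2$ to be the Sobolev conjugate of $r$, so that $\|f-g\|_{L^{\rho_2}}\lesssim\|f-g\|_{\dot W^{s,r}}$, and estimates $\|\,|D|^s\Psi\|_{L^{\rho_3}}$ by the standard single-function fractional chain rule applied to $z\mapsto(\nabla F)(g+\tau(f-g))$ uniformly in $\tau\in[0,1]$, after which the remaining $\kappa-2$ factors are absorbed by H\"older and $\dot W^{s,r}\hookrightarrow L^{r^{*}}$; summing the two contributions yields (3.16).

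The step I expect to require real work, beyond bookkeeping, is justifying the fractional chain rule for $\Psi$ when $s$ is not an integer. I would carry this out through a Littlewood--Paley paraproduct decomposition of $(f-g)\Psi$: the low--high and high--low paraproducts are handled directly by Bernstein's inequality and H\"older, while the high--high paraproduct forces one to invoke the Christ--Weinstein fractional chain rule, and its extension to $C^{l}$ nonlinearities in the range $0\le s\le l$, for the composition inside $\Psi$. The only other delicate point is purely dimensional: one must verify that all auxiliary exponents $\rho_1,\rho_2,\rho_3,r^{*}$ and the intermediate exponent used in the chain rule for $\Psi$ lie strictly between $1$ and $\infty$, and it is exactly here that the hypotheses $1<q<r<\infty$ and $\kappa>2$ are consumed. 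Once the composition estimate for $\Psi$ is in hand, the rest of the proof is an assembly of H\"older inequalities and Sobolev embeddings as above.
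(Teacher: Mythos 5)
Your proposal follows essentially the same route as the paper: factor $F(f)-F(g)=(f-g)V$ via the fundamental theorem of calculus, apply a fractional Leibniz rule to the product, control one term by the pointwise bound $|V|\lesssim(|f|+|g|)^{\kappa-1}$ together with H\"older and Sobolev embedding, and the other by a fractional chain rule for $V$ plus the embedding $\dot W^{s,r}\hookrightarrow L^{a}$, with the same exponent bookkeeping reproducing the scaling condition. The only difference is that you sketch a paraproduct justification of the Leibniz/chain rules where the paper simply cites Gulisashvili--Kon and Kato; the argument is otherwise the same.
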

\begin{proof}
By the fundamental theorem of calculus we write$$F(f)-F(g)=\int_0^1DF((1-\theta)f+\theta g)(f-g)d\theta.$$
Let $V(f,g)=\int_0^1DF((1-\theta)f+\theta g)d\theta$, we have $F(f)-F(g)=(f-g)V(f,g)$.
By Generalized Leibniz rule (see Theorem 5, A. Gulisashvili and M.A. Kon [6])
$$\|F(f)-F(g)\|_{\dot{W}^{s,q}}\lesssim\|f-g\|_{\dot{W}^{s,r}}\|V(f,g)\|_{L^p}+\|f-g\|_{L^a}\|V(f,g)\|_{\dot{W}^{s,b}}, (\ast\ast)$$
where $\frac{1}{q}=\frac{1}{r}+\frac{1}{p}$,  $\frac{1}{q}=\frac{1}{a}+\frac{1}{b}.$
Since $$\|V(f,g)\|_{L^p}\lesssim\|f\|_{L^{p(\kappa-1)}}^{\kappa-1}+\|g\|_{L^{p(\kappa-1)}}^{\kappa-1},$$
by Sobolev embedding, we have 
$$\|V(f,g)\|_{L^p}\lesssim\|f\|_{\dot{W}^{s,r}}^{\kappa-1}+\|g\|_{\dot{W}^{s,r}}^{\kappa-1},$$
if $\frac{1}{r}-\frac{1}{p(\kappa-1)}=\frac{s}{n}.$  Combining   $\frac{1}{q}=\frac{1}{r}+\frac{1}{p}$, we have 
$$\frac{n}{q}=\frac{n\kappa}{r}-(\kappa-1)s.$$
Therefore 
\begin{equation}
\|f-g\|_{\dot{W}^{s,r}}\|V(f,g)\|_{L^p}\lesssim\|f-g\|_{\dot{W}^{s,r}}(\|f\|_{\dot{W}^{s,r}}^{\kappa-1}+\|g\|_{\dot{W}^{s,r}}^{\kappa-1}),
\end{equation}
By Liebnitz rule for fractional derivatives (see Lemma A3, T. Kato[7]), and Sobolev embedding argument similar to that in $(\ast\ast)$ above,
$$\|V(f,g)\|_{\dot{W}^{s,b}}\lesssim\int_0^1\|(1-\theta)f+\theta g\|_{\dot{W}^{s,c}}^{\kappa-1}d\theta\lesssim \|f\|_{\dot{W}^{s,c}}^{\kappa-1}+\|g\|_{\dot{W}^{s,c}}^{\kappa-1}$$
$$\|f-g\|_{L^a}\lesssim\|f-g\|_{\dot{W}^{s,r}},$$
where $\frac{1}{b}=\frac{\kappa-1}{c}-(\kappa-2)\frac{s}{n}, \quad\frac{1}{a}=\frac{1}{r}-\frac{s}{n}.$ Combining with $\frac{1}{q}=\frac{1}{a}+\frac{1}{b}$ and $\frac{n}{q}=\frac{n\kappa}{r}-(\kappa-1)s$, we have $c=r$.
Therefore we have 
\begin{equation}
\|f-g\|_{L^a}\|V(f,g)\|_{\dot{W}^{s,b}}\lesssim\|f-g\|_{\dot{W}^{s,r}}(\|f\|_{\dot{W}^{s,r}}^{\kappa-1}+\|g\|_{\dot{W}^{s,r}}^{\kappa-1})
\end{equation}
with 
$$\frac{n}{q}=\frac{n\kappa}{r}-(\kappa-1)s.$$
Combines (3.15) and (3.16) we have the result.

\end{proof}

Then we give the contraction lemma as the following:
\begin{lemma}
Set $s=\frac{n}{2}-\frac{4}{\kappa-1}$ and assume $ n>4$. Suppose there exists an $l\in\mathbb{N},  l\geq1 $ with $\frac{n}{2}-\frac{4}{\kappa-1}-2\leq l\leq\kappa-1$, then if  we set,
$$A_m(T)=\|u_m\|_{L^{\frac{(n+2)(\kappa-1)}{4}}(S_T)}+\|(\sqrt{-\triangle })^{s-2}u_m\|_{L^{\frac{2(n+2)}{n-4}}(S_T)},$$
and, 
\begin{equation}
B_m(T)=\|(\sqrt{-\triangle })^{s-2}(u_m-u_{m-1})\|_{L^{\frac{2(n+2)}{n-4}}(S_T)},
\end{equation}
there is an $\epsilon_0 > 0$ so that if $ m=0,1,2,...$
\begin{equation}
A_m(T)\leq 2A_0(T),\quad B_{m+1}(T)\leq\frac{1}{2}B_m(T),\quad \text{ if} \quad 2A_0\leq \epsilon_0.
\end{equation}
\end{lemma}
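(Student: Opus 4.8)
The plan is the standard Picard--iteration bootstrap, in the spirit of Lemma 3.2 and Cazenave--Weissler, but now with Theorem 3.4 (i.e. the inequality (3.11)) as the linear input and the fractional chain rule of Lemma 3.5 --- together with the Kato and Gulisashvili--Kon Leibniz rules cited in its proof --- as the nonlinear input. Throughout, write $F_\kappa(u)-F_\kappa(v)=V_\kappa(u,v)(u-v)$ with $V_\kappa(u,v)=\int_0^1 DF_\kappa((1-\theta)v+\theta u)\,d\theta$, so that $|\nabla^i V_\kappa(u,v)|\lesssim|u|^{\kappa-1-i}+|v|^{\kappa-1-i}$ for $0\le i\le l$; this is where $l\le\kappa-1$ enters, since it forces $F_\kappa\in C^{l+1}$ with the required derivative growth. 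First I would apply Theorem 3.4 to the difference $w=u_{m+1}-u_{j+1}$, which by (3.1) solves $(\partial_t^2+\triangle^2)w=F_\kappa(u_m)-F_\kappa(u_j)$ with zero initial data; this reduces the entire estimate to controlling the single quantity $\|(\sqrt{-\triangle})^{s-2}(F_\kappa(u_m)-F_\kappa(u_j))\|_{L^{2(n+2)/(n+4)}(S_T)}$.

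The heart of the proof is this nonlinear estimate. In the energy-supercritical regime $\kappa>\frac{n+4}{n-4}$ one has $s=\tfrac n2-\tfrac4{\kappa-1}>2$, hence $0\le s-2\le l$, so the fractional Leibniz/chain rule may be used to distribute the $s-2$ derivatives across $V_\kappa(u_m,u_j)(u_m-u_j)$. Combined with H\"older in $t$, the key numerology is the identity
\[
\frac{n+4}{2(n+2)}=\frac{n-4}{2(n+2)}+(\kappa-1)\cdot\frac{4}{(n+2)(\kappa-1)},
\]
which holds verbatim both for the $x$-Lebesgue exponents and for the time exponents: it says precisely that the dual space-time exponent $\tfrac{2(n+2)}{n+4}$ is H\"older-paired with one factor in $L^{2(n+2)/(n-4)}_{t,x}$ and $\kappa-1$ factors in $L^{(n+2)(\kappa-1)/4}_{t,x}$ --- exactly the two space-time norms building $A_m$. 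Applying the Leibniz rule once when the $s-2$ derivatives hit the factor $u_m-u_j$, and once more (with the chain rule) when they hit $V_\kappa$ --- in which case one factor of $u_m$ or $u_j$ carries the derivatives, $\kappa-2$ do not, and $u_m-u_j$ is left undifferentiated --- I expect to obtain
\[
\|(\sqrt{-\triangle})^{s-2}(F_\kappa(u_m)-F_\kappa(u_j))\|_{L^{2(n+2)/(n+4)}(S_T)}\lesssim\bigl(A_m(T)^{\kappa-1}+A_j(T)^{\kappa-1}\bigr)E_{m,j}(T),
\]
where $E_{m,j}(T)=\|(\sqrt{-\triangle})^{s-2}(u_m-u_j)\|_{L^{2(n+2)/(n-4)}(S_T)}+\|u_m-u_j\|_{L^{(n+2)(\kappa-1)/4}(S_T)}$.

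To close the bootstrap, take $j=-1$: then $u_{j+1}=u_0$ is the linear evolution, $E_{m,-1}=A_m$, and Theorem 3.4 together with the above yields $A_{m+1}(T)\le A_0(T)+CA_m(T)^\kappa$; with the induction hypothesis $A_m\le 2A_0\le\epsilon_0$ and $\epsilon_0$ fixed so small that $C(2\epsilon_0)^{\kappa-1}\le1$, this gives $A_{m+1}\le2A_0$ (the case $m=0$ being trivial). Taking $j=m-1$ and setting $D_m(T)=\|u_m-u_{m-1}\|_{L^{(n+2)(\kappa-1)/4}(S_T)}+B_m(T)$, Theorem 3.4 and the nonlinear estimate give $D_{m+1}(T)\le C\bigl(A_m(T)^{\kappa-1}+A_{m-1}(T)^{\kappa-1}\bigr)D_m(T)\le\tfrac12 D_m(T)$ once $\epsilon_0$ is small, so that $B_{m+1}\le D_{m+1}\le\tfrac12 D_m$ and $B_m$ decays geometrically. (To obtain the displayed inequality $B_{m+1}\le\tfrac12 B_m$ literally, one incorporates $\|u_m-u_{m-1}\|_{L^{(n+2)(\kappa-1)/4}}$ into the definition of $B_m$, in parallel with $A_m$.)

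The step I expect to be the main obstacle is the nonlinear estimate, and specifically the piece where the $s-2$ derivatives fall on $V_\kappa$: unlike the energy-(sub)critical case of Lemma 3.2, where the undifferentiated $L^{(n+2)(\kappa-1)/4}$ norm alone closes everything, here the extra $\dot W^{s-2}$-regularity must be propagated, and the difference structure unavoidably produces the term $\|u_m-u_j\|_{L^{(n+2)(\kappa-1)/4}}$, which is not literally dominated by $B_m$ --- this is what necessitates working with the combined quantity $D_m$. The remaining work, namely verifying that every Lebesgue and time exponent generated by the iterated Leibniz and chain rules lands exactly on the scaling-critical values $\tfrac{2(n+2)}{n-4}$ and $\tfrac{(n+2)(\kappa-1)}{4}$ --- which is where $s=\tfrac n2-\tfrac4{\kappa-1}$, $\tfrac n2-\tfrac4{\kappa-1}-2\le l\le\kappa-1$ and $n>4$ all get used --- is routine but lengthy.
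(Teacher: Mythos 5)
Your proposal is correct in substance and, for the bound $A_{m+1}\le A_0+CA_m^{\kappa}$, coincides with the paper's argument: both apply the Strichartz inequality (3.11) to $(\partial_t^2+\triangle^2)(u_{m+1}-u_0)=F_\kappa(u_m)$ and then Kato's fractional Leibniz rule
$\|(\sqrt{-\triangle})^{s-2}F_\kappa(u)\|_{L^{2(n+2)/(n+4)}}\lesssim\|u\|^{\kappa-1}_{L^{(n+2)(\kappa-1)/4}}\|(\sqrt{-\triangle})^{s-2}u\|_{L^{2(n+2)/(n-4)}}$,
closing the induction exactly as you describe. Where you genuinely diverge is the contraction estimate for $B_m$. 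You distribute the $s-2$ derivatives over $V_\kappa(u_m,u_{m-1})(u_m-u_{m-1})$ using only H\"older in space-time, so the term in which the derivatives fall on $V_\kappa$ leaves the difference undifferentiated and forces the extra factor $\|u_m-u_{m-1}\|_{L^{(n+2)(\kappa-1)/4}}$; you correctly observe that this is not dominated by $B_m$ and repair it by contracting the augmented quantity $D_m$. The paper instead invokes its fractional chain rule (3.14), whose hypothesis is the scaling relation $\frac{n}{q}=\frac{n\kappa}{r}-(\kappa-1)s$; under that relation the undifferentiated factor $\|f-g\|_{L^a}$ is absorbed by Sobolev embedding into $\|f-g\|_{\dot W^{s-2,\,2(n+2)/(n-4)}}$, so the right-hand side of the difference estimate involves only $B_m$ and the inequality $B_{m+1}\le\frac12 B_m$ is obtained verbatim for the $B_m$ defined in (3.17). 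The trade-off: your route is more elementary and proves a statement that is just as useful for convergence of the iteration, but it is strictly speaking a modification of the lemma as stated; the paper's route gives the stated inequality but silently applies a purely spatial chain-rule lemma to mixed space-time norms, where its scaling condition holds only if one counts the parabolic space-time dimension as $n+2$ --- a subtlety your pure-H\"older identity $\frac{n+4}{2(n+2)}=\frac{n-4}{2(n+2)}+(\kappa-1)\cdot\frac{4}{(n+2)(\kappa-1)}$ sidesteps entirely.
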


\begin{proof}
By the Liebnitz rule for fractional derivatives (see Lemma A3, T.Kato[7]) with $0\leq s-2\leq l$,
\begin{equation}
\|(\sqrt{-\triangle })^{s-2}F(u)\|_{L^q}\lesssim\|u\|^{\kappa-1}_{L^p}\|(\sqrt{-\triangle })^{s-2}u\|_{ L^r}.
\end{equation}
Where $\frac{1}{q}=\frac{\kappa-1}{p}+\frac{1}{r}.$
We apply (3.19) with $q={\frac{2(n+2)}{n+4}}$ , $p={\frac{(n+2)(\kappa-1)}{4}}$, $r= \frac{2(n+2)}{n-4}$. Specifically, this inequality along with 
(3.11) applied to the equation
$$(\partial_t^2+\triangle^2)(u_{m+1}-u_0)=F_\kappa(u_m) $$ 
gives
\begin{multline}
\|u_{m+1}\|_{L^{\frac{(n+2)(\kappa-1)}{4}}(S_T)}+\|(\sqrt{-\triangle })^{s-2}u_{m+1}\|_{L^{\frac{2(n+2)}{n-4}}(S_T)}\\
\leq C\|u_m\|^{\kappa-1}_{L^{\frac{(n+2)(\kappa-1)}{4}}(S_T)}\|(\sqrt{-\triangle })^{s-2}u_{m}\|_{L^{\frac{2(n+2)}{n-4}}(S_T)}\\
+\|u_0\|_{L^{\frac{(n+2)(\kappa-1)}{4}}(S_T)}+\|(\sqrt{-\triangle })^{s-2}u_0\|_{L^{\frac{2(n+2)}{n-4}}(S_T)}.
\end{multline}
So we have \\
\begin{align*}
A_{m+1}
&\leq C'\|u_m\|^{\kappa-1}_{L^{\frac{(n+2)(\kappa-1)}{4}}(S_T)} A_m+A_0\\
&\leq C'' A_m^{\kappa}+A_0.
\end{align*}

Then we  choose a  proper $\epsilon_0$ such that $C''2^\kappa\epsilon_0^{\kappa-1}<1,$ then we could get $A_{m+1}\leq2A_0$ by induction. 
By H$\text{\"{o}}$lder's inequality 
\begin{multline}
B_{m+1}(T)
=\|(\sqrt{-\triangle })^{s-2}(u_{m+1}-u_{m})\|_{L^{\frac{2(n+2)}{n-4}}(S_T)}\\
\leq C\|(\sqrt{-\triangle })^{s-2}(F_\kappa(u_m)-F_\kappa(u_{m-1}))\|_{L^{\frac{2(n+2)}{n+4}}(S_T)}
\end{multline}
By (3.14) with $0\leq s-2\leq l,$ we have\\
%\begin{align*}
$\|F_\kappa(u_m)-F_\kappa(u_{m-1})\|_{\dot{W}^{s-2,\frac{2(n+2)}{n+4}}(S_T)}\qquad(\star)$\\
$\leq C'(\|u_m\|_{\dot{W}^{s-2,\frac{2(n+2)}{n-4}}(S_T)}
+\|u_{m-1}\|_{\dot{W}^{s-2,\frac{2(n+2)}{(n-4)}}(S_T)})^{\kappa-1}\|u_m-u_{m-1}\|_{\dot{W}^{s-2,\frac{2(n+2)}{n-4}}(S_T)}$
$\leq C''\epsilon_0^{\kappa-1}B_m(T),$\\
%\end{align*}
leading to the desired bound if $C''\epsilon_0^{\kappa-1}<\frac{1}{2}$.
\end{proof}

With this contraction lemma, we finish the following:
\begin{proof}[Proof of Theorem 3.4]
From (3.11),
\begin{equation}
A_0(T)=\|u_0\|_{L^{\frac{(n+2)(\kappa-1)}{4}}(S_T)}+\|(\sqrt{-\triangle })^{s-2}u_0\|_{L^{\frac{2(n+2)}{n-4}}(S_T)}\\
\leq C(\|f\|_{{H}^s}+\|g\|_{H^{s-2}}).
\end{equation}
Arguing as before, we may assume $2A_0\leq \epsilon_0$.
Since  $B_0(T)\leq A_0(T)$, then  by (3.18), $u_m$ must tend to a limit in $\dot{W}^{s-2,\frac{2(n+2)}{n-4}}$. Reasoning as in the  estimating of $(\star)$, we see that $F_\kappa(u_m)$ converges to a $u$ limit  in $\dot{W}^{s-2,\frac{2(n+2)}{n+4}}$. By Fatou's lemma, 
\begin{equation}
\|u\|_{ L^{\frac{(n+2)(\kappa-1)}{4}}(S_T)}\leq \liminf_{m\rightarrow\infty}\|u_m\|_{ L^{\frac{(n+2)(\kappa-1)}{4}}(S_T)}\leq 2A_0(T)<\infty.
\end{equation}
then $u\in  L^{\frac{(n+2)(\kappa-1)}{4}}(S_T)$. By (3.17), (3.18) we have  $(\sqrt{-\triangle })^{s-2}u\in  L^{\frac{2(n+2)}{n-4}}$, and by the fractional chain rule we have $(\sqrt{-\triangle })^{s-2}F_\kappa(u)\in L^{\frac{2(n+2)}{n+4}}$. By the same way we argued in the previous case we have $(u_m,\partial_t u)\rightarrow$$(u, \partial_t u)\in C([0,T]; {\dot{H}}^s\times {\dot{H}}^{s-2})$    then the existence proof of Theorem 3.4 with  $\kappa>\frac{n+4}{n-4}$ is completed.

  To prove the uniqueness part of the theorem, we assume $u_1(t,\cdot)$ and $u_2(t,\cdot)$ are two solutions of (1.1) satisfying (3.10) then the difference $w(t,\cdot)=u_1(t,\cdot)-u_2(t,\cdot)$ satisfies the equation

$$ (\partial_t^2+\triangle^2)w(t,\cdot)=V(t,\cdot),$$
$$ w(0,x)=\partial _tw(0,x)=0,$$
 By the Strichartz estimates, we have 
$$\|w\|_{\dot{W}^{s-2,\frac{2(n+2)}{n-4}}(S_T)}\leq C \|V\|_{\dot{W}^{s-2,\frac{2(n+2)}{n+4}}(S_T)}$$
By (3.14),
$$\|(F_\kappa(u_1)-F_\kappa(u_2))\|_{\dot{W}^{s-2,\frac{2(n+2)}{n+4}}(S_T)}$$
$$\leq C'(\|u_1\|_{\dot{W}^{s-2,\frac{2(n+2)}{n-4}}(S_T)}
+\|u_2\|_{\dot{W}^{s-2,\frac{2(n+2)}{n-4}}(S_T)})^{\kappa-1}\|u_1-u_2\|_{\dot{W}^{s-2,\frac{2(n+2)}{n-4}}(S_T)}$$
Then we have,
$$\|w\|_{\dot{W}^{s-2,\frac{2(n+2)}{n-4}}(S_T)}$$$$\leq C'(\|u_1\|_{\dot{W}^{s-2,\frac{2(n+2)}{n-4}}(S_T)}+\|u_2\|_{\dot{W}^{s-2,\frac{2(n+2)}{n-4}}(S_T)})^{\kappa-1}\|w\|_{\dot{W}^{s-2,\frac{2(n+2)}{n-4}}(S_T)}$$\\
If we choose $T$ sufficiently small, $\|w\|_{\dot{W}^{s-2,\frac{2(n+2)}{n-4}}(S_T)}=0$ in $ [0,T]$, Iterating the argument it follows that $w=0$ in $ [0,T]$ for any fixed $T>0$ and this proves uniqueness.
\end{proof}
 
(2) For initial data   $f\in{H}^s, g\in{H}^{s-2},$ we expand the range of $\kappa$.
\begin{theorem}
Set $s=\frac{n}{2}-\frac{4}{\kappa-1}$ and assume $ n>4 $. Suppose there exists an $l\in\mathbb{N}$ with $\frac{n}{2}-\frac{4}{\kappa-1}-2\leq l\leq\kappa$, there is a $T>0,$ a unique ( weak) solution of the nonlinear beam equation (3.1) satisfying 
\begin{equation}
(u, \partial_t u)\in C([0,T]; {H}^s\times {H}^{s-2}) \quad and \quad  u\in L^{\frac{(n+2)(\kappa-1)}{4}}([0,T]\times \mathbb{R}^n)
\end{equation}

\end{theorem}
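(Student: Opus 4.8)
The plan is to repeat the Picard scheme $u_{-1}\equiv 0$, $(\partial_t^2+\triangle^2)u_m=F_\kappa(u_{m-1})$, $u_m(0)=f$, $\partial_t u_m(0)=g$ used for Theorem~3.4, but powered by the inhomogeneous Strichartz estimate (2.8) rather than the homogeneous estimate. \textbf{Step 1: an inhomogeneous master estimate.} I would first set $v=\langle D\rangle^{s-2}u$; then $v$ solves the linear beam equation with forcing $\langle D\rangle^{s-2}F$ and data $\langle D\rangle^{s-2}f\in H^2$, $\langle D\rangle^{s-2}g\in L^2$. Applying (2.8) to $v$ with Sobolev parameter $2$ and $p=r=\frac{2(n+2)}{n-4}$, and applying (2.8) to $u$ itself with $p=r=\frac{(n+2)(\kappa-1)}{4}$, $a'=b'=\frac{2(n+2)}{n+4}$ (the exponents already used to prove the homogeneous inequality (3.11)), one gets $\theta=\theta(n,\kappa)>0$ such that for all $0<T<\infty$
\begin{multline*}
\|u\|_{L^{\frac{(n+2)(\kappa-1)}{4}}(S_T)}+\|\langle D\rangle^{s-2}u\|_{L^{\frac{2(n+2)}{n-4}}(S_T)}+\|u(T,\cdot)\|_{H^s}+\|\partial_t u(T,\cdot)\|_{H^{s-2}}\\
\lesssim (1+|T|^{\theta})\Bigl(\|f\|_{H^s}+\|g\|_{H^{s-2}}+\|F\|_{W^{s-2,\frac{2(n+2)}{n+4}}(S_T)}\Bigr).
\end{multline*}
The only change from (3.11) is the prefactor $(1+|T|^{\theta})$, which is harmless since only \emph{local} existence is asserted (no $T=\infty$).

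\textbf{Step 2: the nonlinear estimate and the contraction.} Here I would decompose $\langle D\rangle^{s-2}=m_0(D)+m_\infty(D)$, with $m_0$ a smooth compactly supported low-frequency multiplier and $m_\infty$ coinciding with $|\xi|^{s-2}$ on $\{|\xi|\ge1\}$. On the high-frequency piece, estimate $m_\infty(D)(F_\kappa(u)-F_\kappa(u'))$ by the Leibniz rule (3.19) together with the fractional chain rule (3.14), now at Sobolev level $s-2$ (legitimate since $0\le s-2\le l$). On the low-frequency piece $m_0(D)$ is bounded on every $L^q$, so $\|m_0(D)(F_\kappa(u)-F_\kappa(u'))\|_{L^q}$ is controlled --- via $F_\kappa(u)-F_\kappa(u')=V_\kappa(u,u')(u-u')$ with $V_\kappa=O(|u|^{\kappa-1}+|u'|^{\kappa-1})$, H\"older's inequality, and Sobolev embedding from $H^s$ --- using only $|F_\kappa(z)|\lesssim|z|^{\kappa}$, $|F_\kappa'(z)|\lesssim|z|^{\kappa-1}$ and no fractional derivative of $F_\kappa$; this is the mechanism by which the admissible range enlarges from $l\le\kappa-1$ (Theorem~3.4) to $l\le\kappa$. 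Setting
\[
A_m(T)=\|u_m\|_{L^{\frac{(n+2)(\kappa-1)}{4}}(S_T)}+\|\langle D\rangle^{s-2}u_m\|_{L^{\frac{2(n+2)}{n-4}}(S_T)},
\]
\[
B_m(T)=\|\langle D\rangle^{s-2}(u_m-u_{m-1})\|_{L^{\frac{2(n+2)}{n-4}}(S_T)},
\]
Step~1 and the nonlinear bound give $A_{m+1}(T)\le(1+|T|^{\theta})\bigl(CA_m(T)^{\kappa}+A_0(T)\bigr)$ and $B_{m+1}(T)\le(1+|T|^{\theta})CA_m(T)^{\kappa-1}B_m(T)$. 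Since $A_0(T)\to0$ as $T\to0$ (dominated convergence for the free evolution of $C_0^\infty$ data, then density), one takes $T$ small enough that $(1+|T|^{\theta})\le2$ and $2A_0(T)\le\epsilon_0$, with $\epsilon_0$ chosen so that --- after an inessential readjustment of the numerical constants to absorb the $T$-factor, as in the contraction lemma for Theorem~3.4 --- the induction $A_m\le2A_0$, $B_{m+1}\le\tfrac12B_m$ closes. Hence $u_m$ converges in $L^{\frac{(n+2)(\kappa-1)}{4}}(S_T)$, $\langle D\rangle^{s-2}u_m$ converges in $L^{\frac{2(n+2)}{n-4}}(S_T)$, $F_\kappa(u_m)\to F_\kappa(u)$ in $W^{s-2,\frac{2(n+2)}{n+4}}(S_T)$, and (2.8) applied to the differences shows $(u_m,\partial_t u_m)$ is Cauchy in $C([0,T];H^s\times H^{s-2})$; passing to the limit in Duhamel's formula identifies the limit with a solution of (1.1).

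\textbf{Step 3: uniqueness.} If $u_1,u_2$ are two solutions with the asserted regularity, $w=u_1-u_2$ solves $(\partial_t^2+\triangle^2)w=V_\kappa(u_1,u_2)w$ with zero data, and Step~1 together with the chain-rule bound of Step~2 yields, on a short interval,
\[
\|\langle D\rangle^{s-2}w\|_{L^{\frac{2(n+2)}{n-4}}(S_T)}\le (1+|T|^{\theta})\,C\,\bigl(\|u_1\|+\|u_2\|\bigr)^{\kappa-1}\,\|\langle D\rangle^{s-2}w\|_{L^{\frac{2(n+2)}{n-4}}(S_T)},
\]
so shrinking $T$ forces $w\equiv0$, and a continuation argument across the fixed time interval finishes the proof. \textbf{Main obstacle.} The step I expect to be hardest is Step~2: one must check carefully that the low-frequency part of $\langle D\rangle^{s-2}F_\kappa(u)$ genuinely costs only a Sobolev embedding --- so that a full unit of smoothness on $F_\kappa$ is spared compared with the homogeneous argument --- since this is exactly what licenses the enlarged hypothesis $l\le\kappa$; a secondary bookkeeping nuisance is keeping all the $(1+|T|^{\theta})$-laden constants below $1$ uniformly in $m$.
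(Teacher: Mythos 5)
Your Step 1 and Step 3 track the paper (its Theorem 3.9 is exactly your master estimate, stated for finite $T$), but Step 2 contains a genuine gap: the mechanism you give for enlarging the admissible range from $l\le\kappa-1$ to $l\le\kappa$ does not work. Splitting $\langle D\rangle^{s-2}=m_0(D)+m_\infty(D)$ only spares regularity of $F_\kappa$ on the low-frequency piece; on the high-frequency piece you still must put $s-2$ derivatives on the \emph{difference} $F_\kappa(u)-F_\kappa(u')$, and the only tool available for that is the fractional chain rule for differences (Lemma 3.6), whose hypothesis $F\in C^{l+1}$ with $|\nabla^iF|\lesssim|z|^{\kappa-i}$ forces $l\le\kappa-1$ for $F_\kappa(z)=|z|^{\kappa-1}z$. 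So your contraction in $B_m(T)=\|\langle D\rangle^{s-2}(u_m-u_{m-1})\|_{L^{2(n+2)/(n-4)}(S_T)}$ is exactly as demanding as in Theorem 3.4 and buys nothing new.

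The paper's actual device is different and worth noting: it runs the contraction in the \emph{underived} norm $B_m(T)=\|u_m-u_{m-1}\|_{L^{2(n+2)/(n-4)}(S_T)}$, so that $B_{m+1}\le C\|F_\kappa(u_m)-F_\kappa(u_{m-1})\|_{L^{2(n+2)/(n+4)}}$ is handled by writing $F_\kappa(u)-F_\kappa(u')=V_\kappa(u,u')(u-u')$ and applying only H\"older's inequality with $\|V_\kappa\|_{L^{(n+2)/4}}\lesssim\|u\|^{\kappa-1}_{L^{(n+2)(\kappa-1)/4}}+\|u'\|^{\kappa-1}_{L^{(n+2)(\kappa-1)/4}}$ --- no fractional derivative of a difference is ever taken; the fractional Leibniz rule (3.19) is applied only to $F_\kappa(u_m)$ itself in the $A_m$ bound, which is where the relaxed condition $l\le\kappa$ suffices. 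This choice creates two obligations your sketch does not address because your (unworkable) contraction would have made them automatic: (i) one must check $B_0\lesssim A_0$, which holds precisely because $\|u_0\|_{L^q}\lesssim\|\langle D\rangle^{s-2}u_0\|_{L^q}$ for $s\ge2$ (this, not your multiplier splitting, is the real payoff of the inhomogeneous derivative at low frequencies); and (ii) convergence of $u_m$ in the underived norm does not directly give $\langle D\rangle^{s-2}u\in L^{2(n+2)/(n-4)}$ for the limit, so the paper recovers it a posteriori from the uniform bound $A_m\le 2A_0$ by testing against $\phi\in C_0^\infty$ and passing to the limit in $\langle u_m,\phi\rangle$ (equations (3.29)--(3.30)). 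You should either adopt this scheme or supply a substitute for the difference chain rule valid up to $l=\kappa$; as written, your argument proves only the $l\le\kappa-1$ case already covered by Theorem 3.4.
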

Similar to the proof of Theorem 3.3, we also need specific Strichartz inequality as the following for Theorem 3.8.
\begin{theorem}
Suppose that u is a solution of (1.1). Then,
$$\|u\|_{L^{\frac{(n+2)(\kappa-1)}{4}}(S_T)}+\|(\sqrt{I-\triangle })^{s-2}u\|_{ L^{\frac{2(n+2)}{n-4}}(S_T)}+\|u(T,\cdot)\|_{{{H}}^s}+\|\partial_t u(T,\cdot)\|_{H^{s-2}}$$
\begin{equation}
\lesssim\|f\|_{H^s}+\|g\|_{H^{s-2}}+\|(\sqrt{I-\triangle })^{s-2}F\| _{L^{\frac{2(n+2)}{n+4}}(S_T)},
\end{equation}
with $S_T=[0,T]\times\mathbb{R}^n, T<\infty$.
\end {theorem}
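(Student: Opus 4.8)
The plan is to imitate the proof of the homogeneous special Strichartz estimate (3.11) of Theorem 3.4, replacing the homogeneous Strichartz estimates used there by the inhomogeneous estimate (2.8) of Theorem 2.4, and the operator $(\sqrt{-\triangle})^{s-2}$ by $(\sqrt{I-\triangle})^{s-2}=\langle D\rangle^{s-2}$. The point of departure is that $\langle D\rangle^{s-2}$ is a Fourier multiplier acting only in $x$, hence commutes with $\partial_t^2+\triangle^2$: if $u$ solves (1.1), then $v:=(\sqrt{I-\triangle})^{s-2}u$ solves the linear beam equation
\[
(\partial_t^2+\triangle^2)v=(\sqrt{I-\triangle})^{s-2}F,\qquad v|_{t=0}=(\sqrt{I-\triangle})^{s-2}f,\qquad \partial_t v|_{t=0}=(\sqrt{I-\triangle})^{s-2}g.
\]
By the very definition of the spaces $H^\sigma$ one has $\|v|_{t=0}\|_{H^2}=\|f\|_{H^s}$, $\|\partial_t v|_{t=0}\|_{L^2}=\|g\|_{H^{s-2}}$, and likewise $\|v(T,\cdot)\|_{H^2}=\|u(T,\cdot)\|_{H^s}$ and $\|\partial_t v(T,\cdot)\|_{L^2}=\|\partial_t u(T,\cdot)\|_{H^{s-2}}$, so estimating $v$ at the energy-critical scale recovers the last two terms on the left of the claimed inequality together with the $(\sqrt{I-\triangle})^{s-2}u$ term.

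First I would apply (2.8) to $v$ with the regularity exponent $2$ in place of $s$, with $p=r=\tfrac{2(n+2)}{n-4}$, and with the pair $(a,b)=\bigl(\tfrac{2(n+2)}{n},\tfrac{2(n+2)}{n}\bigr)$, whose H\"{o}lder conjugate is $(a',b')=\bigl(\tfrac{2(n+2)}{n+4},\tfrac{2(n+2)}{n+4}\bigr)$. Since $\tfrac2a+\tfrac nb=\tfrac n2$, the pair $(a,b)$ is Schr\"{o}dinger-admissible; since $\tfrac2p+\tfrac nr=\tfrac{n-4}{2}=\tfrac n2-2$, the admissibility inequality $\tfrac2p+\tfrac nr\ge\tfrac n2-2$ of Theorem 2.4 holds with equality; and the hypothesis $n>4$ places $p,r,a,b$ strictly between $2$ and $\infty$, so in particular the excluded triple $(p,r,n)=(2,\infty,2)$ cannot occur. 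As $W^{0,b'}=L^{b'}$, this produces
\[
\|(\sqrt{I-\triangle})^{s-2}u\|_{L^{\frac{2(n+2)}{n-4}}(S_T)}+\|u(T,\cdot)\|_{H^s}+\|\partial_t u(T,\cdot)\|_{H^{s-2}}\lesssim\|f\|_{H^s}+\|g\|_{H^{s-2}}+\|(\sqrt{I-\triangle})^{s-2}F\|_{L^{\frac{2(n+2)}{n+4}}(S_T)},
\]
the implicit constant being of the form $C(1+|T|^{\frac{n-4}{2(n+2)}+1})$, which is finite since $T<\infty$, exactly as in Theorem 2.4.

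Next I would apply (2.8) directly to $u$, now keeping the true critical regularity $s=\tfrac n2-\tfrac4{\kappa-1}$, with $p=r=\tfrac{(n+2)(\kappa-1)}{4}$ and the same pair $(a,b)$ (hence the same $(a',b')$). Now $\tfrac2p+\tfrac nr=\tfrac4{\kappa-1}=\tfrac n2-s$, so the condition of Theorem 2.4 again holds with equality, and since we are in the energy-supercritical range $\kappa>\tfrac{n+4}{n-4}$ one computes $p=r=\tfrac{(n+2)(\kappa-1)}{4}>2$. Recalling that $\|(\sqrt{I-\triangle})^{s-2}F\|_{L^{b'}}=\|F\|_{W^{s-2,b'}}$ by definition, this contributes the remaining term $\|u\|_{L^{(n+2)(\kappa-1)/4}(S_T)}$, again dominated by the right-hand side above. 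Summing the two displayed inequalities — the repeated terms $\|u(T,\cdot)\|_{H^s}+\|\partial_t u(T,\cdot)\|_{H^{s-2}}$ on the left doing no harm — and absorbing the two finite $T$-dependent prefactors into a single implicit constant gives the desired estimate.

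The argument is essentially bookkeeping; the only step requiring genuine care is the verification that the two chosen exponent pairs satisfy the hypotheses of Theorem 2.4 — in particular that in each case the scaling gap is exactly the borderline value $\tfrac2p+\tfrac nr=\tfrac n2-s$, so the inequality $\tfrac2p+\tfrac nr\ge\tfrac n2-s$ is saturated, and that $n>4$ together with the supercriticality of $\kappa$ keeps every Lebesgue exponent strictly inside $(1,\infty)$. These are precisely the exponents that reappear in the fractional Leibniz estimate (3.19) and in Lemma 3.5, so checking their admissibility here is exactly what the subsequent contraction argument for the $H^s$ existence theorem needs.
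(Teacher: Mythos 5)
Your proposal is correct and follows exactly the route the paper intends: the paper's proof of this theorem is the single remark that it is ``similar to the proof'' of the homogeneous version (Theorem 3.5), namely conjugating by the multiplier $(\sqrt{I-\triangle})^{s-2}$ to reduce to regularity $2$ and then invoking the inhomogeneous Strichartz estimate (2.8) with the two exponent choices $p=r=\tfrac{2(n+2)}{n-4}$ and $p=r=\tfrac{(n+2)(\kappa-1)}{4}$ against $(a',b')=\bigl(\tfrac{2(n+2)}{n+4},\tfrac{2(n+2)}{n+4}\bigr)$. Your verification of admissibility and of the finiteness of the $T$-dependent prefactors supplies precisely the details the paper omits.
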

The proof of this theorem is similar to the proof of Theorem 3.4.
We also need the following:
\begin{lemma}
Given $ s=\frac{n}{2}-\frac{4}{\kappa-1}, n> 4 $. Suppose there exists an $l\in\mathbb{N}$, when $\frac{n}{2}-\frac{4}{\kappa-1}-2\leq l\leq \kappa$,  if  we set,
$$A_m(T)=\|u_m\|_{L^{\frac{(n+2)(\kappa-1)}{4}}(S_T)}+\|(\sqrt{I-\triangle })^{s-2}u_m\|_{ L^{\frac{2(n+2)}{n-4}}(S_T)},$$
and
\begin{equation}
B_m(T)=\|u_m-u_{m-1}\|_{ L^{\frac{2(n+2)}{n-4}}(S_T)},
\end{equation}
there is an $\epsilon_0 > 0$ so that if $ 2A_0(T)\leq \epsilon_0, B_0(T)\lesssim A_0(T)$ and if $ m=0,1,2,...$\\
\begin{equation}
A_m(T)\leq 2A_0(T),\quad B_{m+1}(T)\leq\frac{1}{2}B_m(T).
\end{equation}
\end{lemma}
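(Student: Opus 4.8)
The plan is to mimic the structure of Lemma 3.7 almost verbatim, replacing the homogeneous operator $(\sqrt{-\triangle})^{s-2}$ by the inhomogeneous one $(\sqrt{I-\triangle})^{s-2}$ and using Theorem 3.9 in place of Theorem 3.6. First I would record the inhomogeneous analog of the fractional Leibniz bound: since $\langle D\rangle^{s-2}=(\sqrt{I-\triangle})^{s-2}$ and $F_\kappa\in C^{l+1}$ with $l\leq\kappa$, one has
\begin{equation*}
\|(\sqrt{I-\triangle})^{s-2}F_\kappa(u)\|_{L^{q}}\lesssim \|u\|_{L^{p}}^{\kappa-1}\|(\sqrt{I-\triangle})^{s-2}u\|_{L^{r}},
\end{equation*}
with the same exponents $q=\frac{2(n+2)}{n+4}$, $p=\frac{(n+2)(\kappa-1)}{4}$, $r=\frac{2(n+2)}{n-4}$ as before (the scaling identity $\frac1q=\frac{\kappa-1}{p}+\frac1r$ is unchanged, and on the low-frequency part $\langle\xi\rangle^{s-2}$ is harmless). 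Applying this together with the Strichartz inequality (3.28) to the Duhamel identity for $(\partial_t^2+\triangle^2)(u_{m+1}-u_0)=F_\kappa(u_m)$ gives, exactly as in (3.21)–(3.22),
\begin{equation*}
A_{m+1}(T)\leq C'\|u_m\|_{L^{\frac{(n+2)(\kappa-1)}{4}}(S_T)}^{\kappa-1}A_m(T)+A_0(T)\leq C''A_m(T)^{\kappa}+A_0(T),
\end{equation*}
so choosing $\epsilon_0$ with $C''2^{\kappa}\epsilon_0^{\kappa-1}<1$ and inducting yields $A_m(T)\leq 2A_0(T)$ whenever $2A_0(T)\leq\epsilon_0$.

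For the contraction estimate on $B_m$, I would write $(\partial_t^2+\triangle^2)(u_{m+1}-u_m)=F_\kappa(u_m)-F_\kappa(u_{m-1})$ and apply (3.28), which gives
\begin{equation*}
B_{m+1}(T)\lesssim \|F_\kappa(u_m)-F_\kappa(u_{m-1})\|_{\dot{W}^{s-2,\frac{2(n+2)}{n+4}}(S_T)}
\end{equation*}
after dominating the inhomogeneous $W^{s-2,\cdot}$ norm by the homogeneous one on the relevant frequency ranges (or, more cleanly, by invoking the inhomogeneous fractional chain rule). The difference $F_\kappa(u_m)-F_\kappa(u_{m-1})=(u_m-u_{m-1})V_\kappa(u_m,u_{m-1})$ is then handled by the Leibniz/fractional chain rule (Lemma 3.6 with $0\leq s-2\leq l$), bounding it by
\begin{equation*}
\bigl(\|u_m\|_{\dot W^{s-2,r}}+\|u_{m-1}\|_{\dot W^{s-2,r}}\bigr)^{\kappa-1}\|u_m-u_{m-1}\|_{\dot W^{s-2,r}}\leq C''\epsilon_0^{\kappa-1}B_m(T),
\end{equation*}
using $A_m,A_{m-1}\leq 2A_0\leq\epsilon_0$. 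Shrinking $\epsilon_0$ so that $C''\epsilon_0^{\kappa-1}<\tfrac12$ gives $B_{m+1}(T)\leq\tfrac12 B_m(T)$. The base case $B_0(T)\lesssim A_0(T)$ is immediate since $u_0-u_{-1}=u_0$.

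The main obstacle I anticipate is the bookkeeping around the inhomogeneous operator $\langle D\rangle^{s-2}$: the fractional chain rule (Lemma 3.6) and the Kato–Gulisashvili–Kon Leibniz rules are stated for the homogeneous derivative $|D|^{s}$, so one must either establish their inhomogeneous counterparts or split into high and low frequencies (as in the proof of Theorem 2.4, using the cutoff $\beta(D)$) and control the low-frequency piece by an $L^2$-energy argument. This is exactly the reason the range of $l$ (hence of $\kappa$) can be taken one unit larger here ($l\leq\kappa$ rather than $l\leq\kappa-1$): on the low-frequency part one effectively gains a derivative for free, so the endpoint $l=\kappa$ of the Leibniz rule becomes admissible. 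Apart from this frequency-splitting step, every inequality is a routine transcription of the homogeneous argument in Lemma 3.7.
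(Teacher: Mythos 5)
Your estimate for $A_m$ follows the paper's argument. The problem is in your treatment of $B_m$, and it stems from a misreading of the statement: in this lemma $B_m(T)=\|u_m-u_{m-1}\|_{L^{2(n+2)/(n-4)}(S_T)}$ carries \emph{no} derivative, unlike the $B_m$ of Lemma 3.7. You bound $F_\kappa(u_m)-F_\kappa(u_{m-1})$ in $\dot W^{s-2,\frac{2(n+2)}{n+4}}$ by $(\cdots)^{\kappa-1}\|u_m-u_{m-1}\|_{\dot W^{s-2,\frac{2(n+2)}{n-4}}}$ and then identify the last factor with $B_m(T)$; for $s>2$ the $\dot W^{s-2,r}$ norm of the difference is not controlled by its $L^r$ norm, so the iteration does not close in the quantity actually being estimated. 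More seriously, routing the difference estimate through the fractional chain rule for differences (Lemma 3.6) requires $F_\kappa\in C^{l+1}$ with the pointwise bounds up to order $l$, which forces $l\le\kappa-1$ and forfeits precisely the endpoint $l=\kappa$ that this lemma is designed to reach. Your explanation that the inhomogeneous operator ``gains a derivative at low frequency'' is not the mechanism by which the range of $l$ is enlarged; the enlargement comes from weakening the metric in which the contraction is run.

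The paper's actual proof of $B_{m+1}\le\frac12 B_m$ uses no fractional Leibniz or chain rule at all. Since the difference is measured in plain $L^{2(n+2)/(n-4)}$, one applies the (underived, $s=2$) Strichartz estimate to $(\partial_t^2+\triangle^2)(u_{m+1}-u_m)=F_\kappa(u_m)-F_\kappa(u_{m-1})$ to get $B_{m+1}(T)\le C\|F_\kappa(u_m)-F_\kappa(u_{m-1})\|_{L^{2(n+2)/(n+4)}(S_T)}$, writes $F_\kappa(u_m)-F_\kappa(u_{m-1})=V_\kappa(u_m,u_{m-1})(u_m-u_{m-1})$ with $|V_\kappa|\lesssim|u_m|^{\kappa-1}+|u_{m-1}|^{\kappa-1}$, and uses H\"{o}lder with $\frac{n+4}{2(n+2)}=\frac{4}{n+2}+\frac{n-4}{2(n+2)}$ to obtain $B_{m+1}(T)\le C'\bigl(\|u_m\|^{\kappa-1}_{L^{(n+2)(\kappa-1)/4}}+\|u_{m-1}\|^{\kappa-1}_{L^{(n+2)(\kappa-1)/4}}\bigr)B_m(T)\le C''\epsilon_0^{\kappa-1}B_m(T)$. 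The fractional Leibniz rule of Kato is needed only once, for the $A_m$ bound, where it acts on a single iterate rather than a difference and so tolerates $l\le\kappa$. You should rewrite the $B$ step along these lines; as written, your version either proves a different lemma (Lemma 3.7 again) or fails at $l=\kappa$.
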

\begin{proof} 
By the same way to prove Lemma 3.7,  using the Liebnitz rule for fractional derivatives (see Lemma A3, T.Kato[7]) with $0\leq s-2\leq l$, we easily have 
\begin{align*}
A_{m+1}
&\leq C'\|u_m\|^{\kappa-1}_{L^{\frac{(n+2)(\kappa-1)}{4}}} A_m+A_0\\
&\leq C'' A_m^{\kappa}+A_0
\end{align*}
Then we want to choose $\epsilon_0$ small such that $C''2^\kappa\epsilon_0^{\kappa-1}<1,$ for then $A_{m+1}\leq2A_0$ by induction.  
Similarly, by H$\text{\"{o}}$lder's inequality and Strichartz inequality,
\begin{align*}
B_{m+1}(T)
&=\|u_{m+1}-u_{m}\|_{L^{\frac{2(n+2)}{n-4}}}\\
&\leq C\|F_\kappa(u_m)-F_\kappa(u_{m-1})\|_{L^\frac{2(n+2)}{n+4}}\\
&\leq C'(\|u_m\|^{\kappa-1}_{L^{\frac{(n+2)(\kappa-1)}{4}}}+\|u_{m-1}\|^{\kappa-1}_{L^{\frac{(n+2)(\kappa-1)}{4}}})B_m(T)\\
&\leq C''\epsilon_0^{\kappa-1}B_m(T).
\end{align*}
If we choose a $\epsilon_0$ such that $C''\epsilon_0^{\kappa-1}<\frac{1}{2}$, we have $ B_{m+1}(T)\leq\frac{1}{2}B_m(T)$
\end{proof}
From (3.29),
\begin{equation}
A_0(T)=\|u_0\|_{L^q(S_T)}+\|(\sqrt{I-\triangle })^{s-2}u_0\|_{L^{\frac{2(n+2)}{n-4}}}\\
\leq C(\|f\|_{{H}^s}+\|g\|_{H^{s-2}}).
\end{equation}
With this contraction lemma, and since $\|u_0\| _{L^{\frac{2(n+2)}{n-4}}}\lesssim\|(\sqrt{I-\triangle })^{s-2}u_0\|_{L^{\frac{2(n+2)}{n-4}}}$, then we have $B_0(T)\lesssim A_0(T).$   By the same way to prove Theorem 3.4 (Fatou's Lemma),  we have
  $u\in L^{\frac{(n+2)(\kappa-1)}{4}}([0,T]\times \mathbb{R}^n) $. Also  if $\phi\in C_0^\infty,\langle u_m,\phi\rangle \rightarrow \langle u,\phi\rangle$ as $ m\rightarrow\infty$. Therefore,  by  H$\text{\"{o}}$lder inequality
\begin{align}
|\langle u_m,\phi \rangle|
&\leq \|(\sqrt{I-\triangle })^{s-2}u_m\|_{L^{\frac{2(n+2)}{n-4}}}\|(\sqrt{I-\triangle })^{2-s}\phi \| _{L^{\frac{2(n+2)}{n+8}}}\\
&\leq2A_0\|(\sqrt{I-\triangle })^{2-s}\phi \| _{L^{\frac{2(n+2)}{n+8}}},
\end{align}
we have 
$$|\langle u,\phi \rangle|\leq 2A_0\|(\sqrt{I-\triangle })^{s-2}\phi \| _{L^{\frac{2(n+2)}{n+8}}},$$ 
and hence $(\sqrt{I-\triangle })^{s-2}u\in L^{\frac{2(n+2)}{n-4}}$. By Strichartz estimates and the Liebnitz rule for fractional derivatives (see Lemma A3, T.Kato[7]) again, we have $$(\sqrt{I-\triangle })^{s-2}F_\kappa(u)\in {L^{\frac{2(n+2)}{n+4}}}.$$ By the same way in previous cases we proved  $(u, \partial_t u)\in C([0,T]; {{H}}^s\times {{H}}^{s-2}),$ then the existence proof of Theorem 3.8 with  $\kappa>\frac{n+4}{n-4}$ is completed. By the same way of the uniqueness proof in the previous theorem, we get the uniqueness of the solution.

\section{Scattering Theory}
In this section we consider the existence of scattering operators for the nonlinear beam equation (1.1) with initial data $f\in\dot{H}^s, g\in\dot{H}^{s-2}$.
\begin{theorem}
For $\kappa\geq1$, consider u is the solution of the equation (1.1) such that Sobolev norm of the data is sufficiently small, namely,
\begin{equation}
\|f\|_{\dot{H}^{s_c}}+\|g\|_{\dot{H}^{s_c-2}}<\epsilon.
\end{equation}
Then there exists $\epsilon >0$ small such that  for such data $(f, g)$, there is small data $( f_{+}, g_{+})\in\dot{H}^{s_c} \times\dot{H}^{s_c-2}$
 so that the solution to the free beam equation with this data,
\begin{equation}
\left\{ \begin{array}{lcl}
\partial_{t}^{2}u_{+} +\triangle^{2}u_{+}=0,\\
u_{+}\mid_{t=0}=f_{+}\in\dot{ H}^{s_c},\\
\partial _{t}u_{+}\mid_{t=0}=g_{+}\in\dot{ H}^{s_c-2}\end{array}\right.
\end{equation}
satisfies 
\begin{equation}
\lim_{T\rightarrow +\infty}\|u(T,\cdot)-u_{+}(T,\cdot)\|_{\dot{s(\kappa)}}=0,
\end{equation}
where
$$\|u(T,\cdot)\|_{\dot{s(\kappa)}}^2=\|u(T,\cdot)\|_{\dot{H}^{s_c}}^2+\|\partial_t u(T,\cdot)\|_{\dot{H}^{s_c-2}}^2.$$
Conversely, if  $( f_{-}, g_{-})\in\dot{H}^{s_c} \times\dot{H}^{s_c-2}$ has sufficiently small norm and  $ u_{-}$
 is the solution to the free beam equation with this data, then there exists a solution $u$ to (1.1) satisfying
\begin{equation}
\lim_{T\rightarrow -\infty}\|u(T,\cdot)-u_{-}(T,\cdot)\|_{\dot{s(\kappa)}}=0.
\end{equation}
Thus, the scattering operator $ S:(f_{-}, g_{-})\rightarrow ( f_{+}, g_{+})$ exists in a neighborhood of the origin in $\dot{H}^{s_c} \times\dot{H}^{s_c-2}$.
\end{theorem}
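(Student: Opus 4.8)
The plan is to deduce all the scattering statements from the small-data global theory of Section~3 together with the fact, built into Theorem~2.3, that the inhomogeneous Strichartz estimate holds with a constant independent of the (possibly infinite) time interval. It is convenient to pass to the first-order form: write $\vec u(t)=(u(t,\cdot),\partial_t u(t,\cdot))$, let $U(t)$ be the free beam propagator sending $(h,k)$ to $\bigl(\cos(t\triangle)h+\tfrac{\sin(t\triangle)}{\triangle}k,\ -\triangle\sin(t\triangle)h+\cos(t\triangle)k\bigr)$, and extend $\|\cdot\|_{\dot{s(\kappa)}}$ to pairs by $\|(h,k)\|_{\dot{s(\kappa)}}^2=\|h\|_{\dot{H}^{s_c}}^2+\|k\|_{\dot{H}^{s_c-2}}^2$. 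Then $\{U(t)\}_{t\in\mathbb{R}}$ is a one-parameter group, $U(t)(h,k)=(w(t,\cdot),\partial_t w(t,\cdot))$ for $w$ the free solution with data $(h,k)$, and a short Fourier-side computation shows $\|U(t)(h,k)\|_{\dot{s(\kappa)}}=\|(h,k)\|_{\dot{s(\kappa)}}$, i.e.\ $U(t)$ is a $\dot{s(\kappa)}$-isometry. Duhamel's formula for (1.1) reads $\vec u(t)=U(t)(f,g)+\int_0^t U(t-\tau)\bigl(0,F_\kappa(u(\tau))\bigr)\,d\tau$.

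\textbf{Step 1 (global solution with finite Strichartz norm).} Running the Picard iteration of the proofs of Theorems~3.1, 3.4 and 3.8 over $[0,\infty)$ instead of $[0,T]$ — legitimate since the constants of Theorem~2.3 do not depend on $I$ — the hypothesis (4.1) furnishes, for $\epsilon$ small, a unique global solution with $\vec u\in C([0,\infty);\dot{H}^{s_c}\times\dot{H}^{s_c-2})$ whose relevant Strichartz norm $\|u\|_{L^{\frac{(n+2)(\kappa-1)}{4}}([0,\infty)\times\mathbb{R}^n)}$ — together with $\|(\sqrt{-\triangle})^{s_c-2}u\|_{L^{\frac{2(n+2)}{n-4}}}$ in the energy-supercritical range — is $\le C\epsilon$. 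Feeding this into the nonlinear estimate used there (H\"older with $|F_\kappa(u)|\le|u|^\kappa$ when $0\le s_c\le 2$, the fractional chain rule of Lemma~3.6 or Kato's Lemma~A3 when $s_c>2$) yields the key finiteness
\[
\bigl\|F_\kappa(u)\bigr\|_{L^{a'}([0,\infty);\dot{W}^{s_c-2,b'})}\ \lesssim\ \|u\|_{\mathrm{Strichartz}}^{\kappa}\ <\ \infty
\]
for a Schr\"odinger-admissible pair $(a,b)$ realizing the critical scaling $s=s_c$ (note $a\ge2$, so $a'<\infty$).

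\textbf{Step 2 (scattering states and wave operators).} Set $(f_+,g_+):=\lim_{t\to+\infty}U(-t)\vec u(t)$, the limit in $\dot{H}^{s_c}\times\dot{H}^{s_c-2}$. It exists by the Cauchy criterion: using the group and isometry properties of $U$ and the energy part of Theorem~2.3 for the Duhamel formula on $[t_1,t_2]$,
\[
\bigl\|U(-t_2)\vec u(t_2)-U(-t_1)\vec u(t_1)\bigr\|_{\dot{s(\kappa)}}\ \lesssim\ \bigl\|F_\kappa(u)\bigr\|_{L^{a'}([t_1,t_2];\dot{W}^{s_c-2,b'})},
\]
which tends to $0$ as $t_1,t_2\to\infty$ by Step~1; the same bound gives $\|f_+\|_{\dot{H}^{s_c}}+\|g_+\|_{\dot{H}^{s_c-2}}\lesssim\epsilon$. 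Writing $u_+$ for the free solution with data $(f_+,g_+)$, so $U(t)(f_+,g_+)=(u_+(t,\cdot),\partial_t u_+(t,\cdot))$, and letting $t_2\to\infty$ in the same inequality gives $\|u(t,\cdot)-u_+(t,\cdot)\|_{\dot{s(\kappa)}}=\|U(-t)\vec u(t)-(f_+,g_+)\|_{\dot{s(\kappa)}}\lesssim\|F_\kappa(u)\|_{L^{a'}([t,\infty);\dot{W}^{s_c-2,b'})}\to0$ as $t\to+\infty$, which is (4.4). For the converse, given small $(f_-,g_-)$ with free solution $u_-$, solve $\vec u(t)=U(t)(f_-,g_-)+\int_{-\infty}^{t}U(t-\tau)\bigl(0,F_\kappa(u(\tau))\bigr)\,d\tau$ by a contraction in the Strichartz space over $\mathbb{R}$ (time-reversibility of the beam equation makes Theorem~2.3 available on $(-\infty,0]$): the homogeneous estimate makes the free term small and the nonlinear estimate of Step~1 makes the Duhamel term contractive, exactly as in Section~3; the resulting $u$ solves (1.1), has small global Strichartz norm, and the mirror of the argument above yields $\|u(t,\cdot)-u_-(t,\cdot)\|_{\dot{s(\kappa)}}\to0$ as $t\to-\infty$, which is (4.5). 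Finally, this $u$ has small global Strichartz norm, so Step~2 applies to it and produces $(f_+,g_+)$; setting $S(f_-,g_-):=(f_+,g_+)$ defines the scattering operator in a neighborhood of the origin of $\dot{H}^{s_c}\times\dot{H}^{s_c-2}$.

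The real difficulty sits in Step~1: for the given $\kappa$ one must exhibit an admissible choice of exponents realizing the critical scaling $s_c=\tfrac{n}{2}-\tfrac{4}{\kappa-1}$ and verify that the associated nonlinear estimate holds with constant independent of the now-infinite time interval; in the energy-supercritical regime this reimposes the constraint $0\le s_c-2\le l\le\kappa$ (resp.\ $\kappa-1$) inherited from Lemmas~3.6--3.7, and for non-integer $\kappa$ the limited smoothness of $F_\kappa$ must be accounted for. Once $\|F_\kappa(u)\|_{L^{a'}_t\dot{W}^{s_c-2,b'}}<\infty$ is in place, the remaining steps are soft consequences of the isometry property of $U(t)$ and the vanishing of the tail of this norm.
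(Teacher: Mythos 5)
Your proposal is correct and follows essentially the same route as the paper: both deduce scattering from the global small-data theory of Section~3 (finiteness of the global spacetime norm of $F_\kappa(u)$), use the vanishing of its tail together with the Strichartz/energy estimates to show the asymptotic data converge, and construct the inverse wave operator by Picard iteration on the integral equation with $\int_{-\infty}^{t}$. The only difference is cosmetic: you phrase the forward direction as a Cauchy criterion for $U(-t)\vec u(t)$ using the isometry of the free group, whereas the paper extracts a discrete sequence of times $T_j$ with tail $<2^{-j}$, matches free solutions $u_j$ to $u$ at those times, and transports back to $t=0$ by the energy inequality — the same mechanism in discrete form.
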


In the proof, we will only consider $\kappa \leq\frac{n+4}{n-4}$, $n>2$ case, because for $\kappa >\frac{n+4}{n-4}$ case, the method is the same, provided $l$ satisfies hypothesis of Theorem 3.4.
\begin{proof}
To prove (4.3), first we have $u\in L^{\frac{(n+2)(\kappa-1)}{4}}$  and $F_\kappa(u)\in  L^{\frac{(n+2)(\kappa-1)}{4\kappa}}$.
It follows that there is  an increasing sequence of times $ T_j,$ for which
\begin{equation}
\left(\int_{T_j}^{\infty}\int_{\mathbb{R}^n}|F_\kappa(u)|^{\frac{(n+2)(\kappa-1)}{4\kappa}}dxdt\right)^ {\frac{4\kappa}{(n+2)(\kappa-1)}}<2^{-j}
\end{equation}
Then we let $u_j$ solve the free beam equation with the same data as $u$ at $t=T_j$:
$$\left\{ \begin{array}{lcl}
\partial_{t}^{2}u_{j} +\triangle^{2}u_{j}=0,\\
u_{j}\mid_{t=T_j}=u(T_j,\cdot),\quad
\partial _{t}u_{j}\mid_{t=T_j}=\partial_tu(T_j,\cdot) \end{array}\right.$$
Then $u-u_j$ has zero data at $t=T_j$ and satisfies
 $$(\partial_{t}^{2} +\triangle^{2})(u-u_j)=F_\kappa (u).$$
Then by the Strichartz estimates (2.2)  and (4.5), we have for $T>T_j$,
\begin{equation}
\|u(T,\cdot)-u_{j}(T,\cdot)\|_{\dot{s(\kappa)}}\leq C\left(\int_{T_j}^{\infty}\int_{\mathbb{R}^n}|F_\kappa(u)|^{\frac{(n+2)(\kappa-1)}{4\kappa}}dxdt\right)^ {\frac{4\kappa}{(n+2)(\kappa-1)}}<C2^{-j},
\end{equation}
Since $u$ and $u_k$ have a the same initial data at $t=T_k$, if $k>j$ this implies 
$$\|u_k(T_k,\cdot)-u_j(T_k,\cdot)\|_{\dot{s(\kappa)}}=\|u(T_k,\cdot)-u_j(T_k,\cdot)\|_{\dot{s(\kappa)}}
\leq C2^{-j}$$
Consequently, the energy inequality yields 
$$\|u_k(0,\cdot)-u_j(0,\cdot)\|_{\dot{s(\kappa)}}\leq C2^{-j}$$
Therefore $f_j=u_j(0,\cdot), g_j=\partial_t u_j(0,\cdot)$, is a Cauchy sequence of initial data in $\dot{H}^{s_c} \times\dot{H}^{s_c-2}$.
 If we let $\lim_{j \rightarrow\infty }f_j=f_{+}, \lim_{j \rightarrow\infty} g_j =g_{+}$, then (4.6) and the energy inequality yield
$$\lim_{T\rightarrow +\infty}\|u(T,\cdot)-u_{+}(T,\cdot)\|_{\dot{s(\kappa)}}=0.$$

  To prove the second part of the theorem we define $u_{-}$ is so that the solution to the free beam equation with initial data $(f_{-},g_{-} )\in\dot{H}^{s_c} \times\dot{H}^{s_c-2}$. which has small norm.
We let $ u_{-1}=0$ and $u_0=u_{-}$ be defined by
\begin{equation}
\left\{ \begin{array}{lcl}
\partial_{t}^{2}u_{-} +\triangle^{2}u_{-}=0,\\
u_{-}\mid_{t=0}=f_{-}\in\dot{ H}^{s_c},\\
\partial _{t}u_{-}\mid_{t=0}=g_{-}\in\dot{ H}^{s_c-2},\end{array}\right.
\end{equation}
and define $u_m, m=1,2,...$ by
\begin{equation}
u_m(t,\cdot)=u_0(t,\cdot)+\int_{-\infty}^t\frac{\sin((t-s)\triangle)}{\triangle}F_{\kappa}(u_{m-1})(s,\cdot)ds,
\end{equation}
which means that $u_m$ solves  $\partial_t^2u_m+\triangle^2u_m=F_\kappa(u_{m-1}$) with initial data $(f_{-}, g_{-})$. Then use a Picard  iteration argument similar to before. Similar to Lemma 3.2, we have that $u_m$ converges to a solution $u$ of
\begin{equation}
u(t,\cdot)=u_0(t,\cdot)+\int_{-\infty}^t\frac{\sin((t-s)\triangle)}{\triangle}F_{\kappa}(u)(s,\cdot)ds,
\end{equation}
where $u\in L^{\frac{(n+2)(\kappa-1)}{4}}$, $F_\kappa(u)\in  L^{\frac{(n+2)(\kappa-1)}{4\kappa}}$  and for any $T,$ $(u, \partial_t u)\in C([0,T]; \dot{H}^{s_c}\times \dot{H}^{s_c-2}).$ 
Then we have  (4.4), therefore, the scattering operator $S: (f_{-},g_{-})\rightarrow(f_{+},g_{+})$ exists in a neighborhood of the origin in $ \dot{H}^{s_c} \times\dot{H}^{s_c-2}.$
\end{proof}

\section{ill-posedness results}
We now consider ill-posedness of the nonlinear beam equation (1.1) in the defocusing case. According to the small dispersion analysis of M. Christ, J. Colliander and T. Tao we have the result follows the following 

\begin{theorem}
Let $ n\geq 1$, $\omega=-1$ and $ \kappa>1$, if $ \kappa$ is not an odd integer, we assume $ \kappa\geq k+2$ for some integer $ k>n/2$, suppose that $ 0<s<s_c=\frac{n}{2}-\frac{4}{\kappa-1}.$ Then for any $ \epsilon>0$ there exist a real-valued solution u of  the nonlinear beam equation (1.1) and $ t\in \mathbb{R}^{+}$ such that $u(0)\in \mathcal{S}$
$$ \|u(0)\|_{H^s}<\epsilon,$$
$$u_t(0)=0,$$
and $$0<t<\epsilon,$$
$$\|u(t)\|_{H^s}>\epsilon^{-1}.$$
In particular, for  any $t>0$ the solution map $\mathcal{S}\times \mathcal{S}$ $\owns (u(0),u_t(0))\rightarrow(u(t),u_t(t))$, for Cauchy problem (1.1) fails to be continuous at $ 0$ in the $H^s\times H^{s-2}$ topology.
\end{theorem}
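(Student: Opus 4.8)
The plan is to adapt the small-dispersion construction of M. Christ, J. Colliander and T. Tao [2] to the operator $\partial_t^2+\triangle^2$. Fix a real, positive, Gaussian-type profile $\psi\in\mathcal S(\RR^n)$, and look for a solution of (1.1) with $\omega=-1$ of the form
\begin{equation*}
u(t,x)=A\,W\!\left(A^{(\kappa-1)/2}t,\;Nx\right),\qquad A>0,\ N\gg1,
\end{equation*}
with $A,N$ to be chosen. Substituting and writing $\nu:=N^{4}A^{-(\kappa-1)}$, one finds that $W$ must solve the small-dispersion beam equation
\begin{equation*}
W_{\tau\tau}+\nu\,\triangle^{2}W+|W|^{\kappa-1}W=0,\qquad W|_{\tau=0}=\psi,\quad W_\tau|_{\tau=0}=0,
\end{equation*}
so automatically $\partial_t u|_{t=0}=0$, $u(0,\cdot)=A\,\psi(N\cdot)\in\mathcal S$, and $u$ is real-valued. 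When $\nu=0$ this decouples into the pointwise ODE $W_{\tau\tau}=-|W|^{\kappa-1}W$, whose solution from the above data is, by the scaling of the ODE, $W_{\mathrm{ode}}(\tau,y)=\psi(y)\,\tilde a\!\big(\psi(y)^{(\kappa-1)/2}\tau\big)$, where $\tilde a''=-|\tilde a|^{\kappa-1}\tilde a$, $\tilde a(0)=1$, $\tilde a'(0)=0$. Because $\omega=-1$ makes the potential $\tfrac1{\kappa+1}|\tilde a|^{\kappa+1}$ confining, $\tilde a$ is global, periodic and bounded — this use of the defocusing sign is essential, since for $\omega=+1$ the ODE blows up — and the hypothesis ``$\kappa$ odd, or $\kappa\ge k+2$ with $k>n/2$'' guarantees that $z\mapsto|z|^{\kappa-1}z$ is $C^{k}$, hence that $\tilde a$ and the profiles $W_{\mathrm{ode}}(\tau,\cdot)$ are smooth enough for the estimates below.

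Two facts are then needed. First, a dispersion-generating lower bound: $\nabla_y W_{\mathrm{ode}}$ contains the term $\tfrac{\kappa-1}{2}\,\tau\,\psi^{(\kappa-1)/2}\tilde a'\!\big(\psi^{(\kappa-1)/2}\tau\big)\nabla\psi$, which spreads the Fourier support of $W_{\mathrm{ode}}(\tau,\cdot)$ out to frequencies of size $\sim\tau$; a non-degeneracy argument for the periodic function $\tilde a'$ yields $\|W_{\mathrm{ode}}(\tau,\cdot)\|_{\dot H^{\sigma}}\gtrsim_\psi\tau^{\sigma}$ for $\tau\gtrsim1$ and $0<\sigma\le k$. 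Second — the technical heart — a stability estimate: there is $T_\nu\to\infty$ as $\nu\to0$ such that $W$ exists on $[0,T_\nu]$ with $\sup_{0\le\tau\le T_\nu}\|W(\tau,\cdot)-W_{\mathrm{ode}}(\tau,\cdot)\|_{H^{k}}\le\nu^{1/10}$. The error $r=W-W_{\mathrm{ode}}$ satisfies $r_{\tau\tau}+\nu\triangle^{2}r+Vr=-\nu\triangle^{2}W_{\mathrm{ode}}$ with zero data, where $V=\int_0^1\kappa|W_{\mathrm{ode}}+\theta r|^{\kappa-1}\,d\theta\ge0$ (again using $\omega=-1$); a continuity/bootstrap argument based on $H^{k}$ energy estimates — with the defocusing conservation law keeping $\|W(\tau)\|_{L^\infty}$ bounded, $H^{k}\hookrightarrow L^\infty$, and Moser product estimates for the $C^{k}$ nonlinearity — gives $\|r(\tau)\|_{H^{k}}\le C\nu\,G(\tau)$ with $G$ increasing, hence $\le\nu^{1/10}$ up to $T_\nu\to\infty$. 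Interpolating this against the trivial $L^2$ bound gives $\|r(\tau,\cdot)\|_{\dot H^{s}}\lesssim\nu^{1/10}=o(\tau^{s})$ for $\nu$ small. I expect this stability estimate on the long interval $[0,T_\nu]$ to be the main obstacle: although $\omega=-1$ makes $V\ge0$ and keeps $W$ bounded, the term $\tfrac12\int(\partial_\tau V)\,r^{2}$ in the $H^{k}$ energy identity is not absorbed by the energy and forces a Gronwall loss, so $T_\nu$ grows only like a small power of $\log(1/\nu)$ — but, as explained below, this still suffices because $1/\nu$ is unconstrained from above.

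Now I would choose the parameters. From $\widehat{u(0)}(\xi)=A N^{-n}\hat\psi(\xi/N)$, $N\ge1$, $s>0$, one gets $\|u(0)\|_{H^{s}}\sim_\psi A\,N^{s-n/2}$; normalizing $A\,N^{s-n/2}=c\,\epsilon$ gives $A=c\epsilon N^{n/2-s}$ and
\begin{equation*}
\nu=(c\epsilon)^{-(\kappa-1)}\,N^{\,4-(\kappa-1)(n/2-s)}.
\end{equation*}
Rescaling the lower bounds back, with $\mu=A^{(\kappa-1)/2}$ and $\tau\le T_\nu$,
\begin{equation*}
\|u(\tau/\mu)\|_{H^{s}}\ \ge\ \|u(\tau/\mu)\|_{\dot H^{s}}\ =\ A N^{s-n/2}\,\|W(\tau)\|_{\dot H^{s}}\ \gtrsim\ c\,\epsilon\,(\tau^{s}-\nu^{1/10}).
\end{equation*}
Choosing $\tau_\epsilon\sim\epsilon^{-2/s}$ and $t_\epsilon=\tau_\epsilon/\mu$, one then has $\|u(t_\epsilon)\|_{H^{s}}>\epsilon^{-1}$ provided $\nu$ is small. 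It remains to secure $0<t_\epsilon<\epsilon$ and $\tau_\epsilon\le T_\nu$. Since $t_\epsilon=\tau_\epsilon A^{-(\kappa-1)/2}\to0$ as $N\to\infty$, the first holds once $N$ is a large enough power of $1/\epsilon$. For the second: $\nu\to0$ as $N\to\infty$ \emph{precisely} when the exponent $4-(\kappa-1)(n/2-s)$ is negative, i.e. precisely when $s<\tfrac n2-\tfrac4{\kappa-1}=s_c$; so, enlarging $N$ further if necessary, $\nu$ can be made small enough that $T_\nu\ge\tau_\epsilon$ (this is where the hypothesis $s<s_c$ is used, and it is the only place it is genuinely needed). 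This produces the stated $u$; letting $\epsilon\to0$ gives norm inflation, and the failure of continuity at $0$ of the time-$t$ solution map for every fixed $t>0$ then follows from the scaling symmetry (1.2) in the standard way, by translating the bad time $t_\epsilon$ to the prescribed $t$ while keeping the data small (using $s<s_c$ again).
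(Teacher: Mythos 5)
Your proposal is correct and follows essentially the same route as the paper: the ansatz $u(t,x)=A\,W(A^{(\kappa-1)/2}t,Nx)$ with $\nu=N^4A^{-(\kappa-1)}$ is exactly the paper's rescaling (5.2)--(5.3) of the small-dispersion problem (5.1), your ODE profile and its $\tau^{\sigma}$ Sobolev growth are the paper's $\phi^0$ and (5.19), your $H^{k}$ energy/Gronwall stability estimate on a time interval of length a power of $\log(1/\nu)$ is the paper's Lemma 5.2, and the final parameter choice exploiting $s<s_c$ to make $\nu\to0$ while keeping the data norm $\sim\epsilon$ and the inflation time $<\epsilon$ matches the paper's conclusion. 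The only cosmetic difference is your choice of a positive profile $\psi$ in place of the paper's $\phi_0=\psi^{2l}$ to ensure smoothness of $|\phi_0|^{(\kappa-1)/2}$.
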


We analyze the small dispersion approximation for the beam equation (1.1),
\begin{equation}
 \left\{ \begin{array}{lcl}
\partial_{\tau}^{2}\phi (\tau,y)+\nu^4\triangle^{2}\phi(\tau,y)=\omega|\phi|^{\kappa-1}\phi,\\
\phi(0,y)=\phi_0(y),\\
\partial _{s}\phi(0,y)=0\end{array}\right.
\end{equation}
in the zero-dispersion limit $\nu\rightarrow 0$. 
Then for time ${t}$ define 
\begin{equation}
 u({t},x)=\phi({t},\nu x)
\end{equation}
for fixed initial datum $\phi_0$ in the small dispersion regime $\nu\rightarrow$0,  (5.1) can be transformed back into (1.1). Indeed, for any solution $\phi$ of  (5.1), by the scaling symmetry, 
\begin{equation}
\lambda^{\frac{-4}{\kappa-1}}\phi (\lambda^{-2}{t},\lambda^{-1}\nu x)
\end{equation}
also defines a solution of (1.1).

Setting $\nu=0$ in (5.1) gives the ODE
\begin{equation}
 \left\{ \begin{array}{lcl}
\partial_{s}^{2}\phi (\tau,y)= \omega|\phi|^{\kappa-1}\phi,\\
\phi(0,y)=\phi_0(y),\\
\partial _{\tau}\phi(0,y)=0\end{array}\right.
\end{equation}
 we define $\phi^0$ is this ODE solution, 

In the defocusing case  $\omega=-1$, we give the solution formula as the following
\begin{equation}
\phi^0(\tau,y)=\mathcal{C} (|\phi_0(y)|^\frac{\kappa-1}{2}\tau)\phi_0(y),
\end{equation}
where $\mathcal{C}:\mathbb{R}\rightarrow\mathbb{R}$ is the unique solution to the ODE
$$-\mathcal{C}''(\tau)=|\mathcal{C}(\tau)|^{\kappa-1}\mathcal{C}(\tau); \qquad \mathcal{C}(0)=1; \qquad \mathcal{C}'(0)=0.$$
This is the Hamiltonian flow on a two dimensional phase space with Hamiltonian
$$H:\frac{1}{2}|\mathcal{C}'(\tau)|^2+\frac{1}{\kappa+1}|\mathcal{C}(\tau)|^{\kappa+1}.$$

  It can be seen that $\mathcal{C}$ is a bounded nonconstant periodic  function and $C^{k+4}$ function for some $k>\frac{n}{2}$ since $F=\omega|\phi|^{\kappa-1}\phi\in C^{k+2}$. To  avoid causing some problems with smoothness of $|\phi_0(y)|$, we let $\phi_0(y)=(\psi(y))^{2l}$, where $\psi(y)$ is real Schwartz function and $l$ is sufficiently large.

We now use the following lemma to see that the solution of (5.1) $\phi$ may stay close to the ODE solution $\phi^0$,
when $\nu> 0$ is small.
\begin{lemma}
Let $n\geq1, \kappa\geq1, k>\frac{n}{2}$ be an integer, and if $\kappa$ is not an odd integer, then $\kappa\geq k+2$. Let  $\phi_0(y)=(\psi(y))^{2l}$, where $\psi(y)$ is a Schwartz function, and $l$ is sufficiently large, so $\phi_0$ is the square of a Schwartz function. Then there exist $ C, c,$ such that for each sufficiently small real number $0<\nu\leq c$, there exists a solution $\phi(\tau,y)$ of (5.1) for all $|\tau|\leq c|\ln\nu|^c$ such that 
\begin{equation}
||\phi(\tau)-\phi^0(\tau)||_{H^{k}}+||\phi_\tau(\tau)-\phi_\tau^0(\tau)||_{H^{k}}\leq C|\nu|, 
\end{equation}with $\phi^0 $ as in (5.5).
\end{lemma}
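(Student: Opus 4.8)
The plan is to control the difference $w:=\phi-\phi^{0}$ between the small-dispersion solution of (5.1) and the ODE profile (5.5) by a high-order energy estimate for the beam operator, treating the bilaplacian dispersion and the mismatch of the two nonlinearities as inhomogeneous forcing, and to run a continuity argument that simultaneously produces the solution $\phi$ on the asserted time interval. Subtracting (5.4) from (5.1) and using $\partial_{\tau}^{2}\phi^{0}=\omega|\phi^{0}|^{\kappa-1}\phi^{0}$ shows that
$$\partial_{\tau}^{2}w+\nu^{4}\triangle^{2}w=\omega\bigl(|\phi|^{\kappa-1}\phi-|\phi^{0}|^{\kappa-1}\phi^{0}\bigr)-\nu^{4}\triangle^{2}\phi^{0}=:G,\qquad w(0)=0,\quad\partial_{\tau}w(0)=0.$$
The first step is to record that although $\|\phi^{0}(\tau)\|_{H^{k}}$ grows in $\tau$, it grows only polynomially while $\|\phi^{0}(\tau)\|_{L^{\infty}}$ stays bounded. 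From (5.5), $\|\phi^{0}(\tau)\|_{L^{\infty}}\le\|\phi_{0}\|_{L^{\infty}}\|\mathcal{C}\|_{L^{\infty}}\lesssim1$; and since each $y$-derivative landing on $\mathcal{C}(|\phi_{0}|^{(\kappa-1)/2}\tau)$ costs one factor of $\tau$, the chain rule and the Leibniz rule, together with the smoothness of $\mathcal{C}$ (here $\kappa\ge k+2$ for non-odd $\kappa$, hence $F\in C^{k+2}$, hence $\mathcal{C}\in C^{k+4}$, is used) and of $\phi_{0}=\psi^{2l}$ (the reason $\phi_{0}$ is taken to be the $2l$-th power of a Schwartz function with $l$ large), give
$$\|\phi^{0}(\tau)\|_{H^{k}}+\|\partial_{\tau}\phi^{0}(\tau)\|_{H^{k}}\lesssim\langle\tau\rangle^{k},\qquad\|\triangle^{2}\phi^{0}(\tau)\|_{H^{k}}\lesssim\langle\tau\rangle^{k+4}.$$

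The second step is the energy estimate. Set $\mathcal{E}(\tau):=\|w(\tau)\|_{H^{k}}^{2}+\|\partial_{\tau}w(\tau)\|_{H^{k}}^{2}+\nu^{4}\|\triangle w(\tau)\|_{H^{k}}^{2}$. Applying $\langle D\rangle^{k}$ to the $w$-equation, pairing with $\langle D\rangle^{k}\partial_{\tau}w$ in $L^{2}$, and integrating the dispersive term by parts, the contribution of $\nu^{4}\triangle^{2}w$ equals $-\tfrac{\nu^{4}}{2}\tfrac{d}{d\tau}\|\triangle w\|_{H^{k}}^{2}$ and cancels the derivative of the last term of $\mathcal{E}$; what remains is
$$\tfrac{d}{d\tau}\mathcal{E}=2\langle w,\partial_{\tau}w\rangle_{H^{k}}+2\langle\partial_{\tau}w,G\rangle_{H^{k}},\qquad\text{whence}\qquad\tfrac{d}{d\tau}\sqrt{\mathcal{E}}\le\tfrac12\sqrt{\mathcal{E}}+\|G\|_{H^{k}}.$$
Now $\|G\|_{H^{k}}\le\nu^{4}\|\triangle^{2}\phi^{0}\|_{H^{k}}+\|\,|\phi|^{\kappa-1}\phi-|\phi^{0}|^{\kappa-1}\phi^{0}\,\|_{H^{k}}$; the first term is $\lesssim\nu^{4}\langle\tau\rangle^{k+4}$ by Step 1, and for the second I use a Moser-type estimate (in the spirit of the fractional chain rule of Section 3 and the Leibniz rule for fractional derivatives of [7]): writing $|\phi|^{\kappa-1}\phi-|\phi^{0}|^{\kappa-1}\phi^{0}=\bigl(\int_{0}^{1}F'(\phi^{0}+\theta w)\,d\theta\bigr)w$ with $F(z)=|z|^{\kappa-1}z$, using that $H^{k}$ is a Banach algebra for $k>n/2$, and using $\|\phi\|_{L^{\infty}},\|\phi^{0}\|_{L^{\infty}}\lesssim1$ (legitimate under the bootstrap hypothesis $\sqrt{\mathcal{E}}\le c_{0}$, since $\|w\|_{L^{\infty}}\lesssim\|w\|_{H^{k}}$) and $\|\phi\|_{H^{k}}\le\|\phi^{0}\|_{H^{k}}+\|w\|_{H^{k}}\lesssim\langle\tau\rangle^{k}$, one gets $\|F(\phi)-F(\phi^{0})\|_{H^{k}}\lesssim\langle\tau\rangle^{k}\|w\|_{H^{k}}\le\langle\tau\rangle^{k}\sqrt{\mathcal{E}}$. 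Altogether $\tfrac{d}{d\tau}\sqrt{\mathcal{E}}\le C\langle\tau\rangle^{k}\sqrt{\mathcal{E}}+C\nu^{4}\langle\tau\rangle^{k+4}$.

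Finally I integrate from $\tau=0$, where $\mathcal{E}=0$: with an integrating factor bounded below by $\exp(-C'\langle\tau\rangle^{k+1})$ this yields $\sqrt{\mathcal{E}(\tau)}\lesssim\nu^{4}\langle\tau\rangle^{k+5}\exp(C'\langle\tau\rangle^{k+1})$. Choosing $c$ small enough (for instance $c\le\tfrac{1}{2(k+1)}$), both $\exp(C'\langle\tau\rangle^{k+1})$ and $\langle\tau\rangle^{k+5}$ are at most $\nu^{-1/2}$ throughout $|\tau|\le c|\ln\nu|^{c}$ once $\nu$ is sufficiently small, so $\sqrt{\mathcal{E}(\tau)}\lesssim\nu^{3}\le c_{0}/2$ there. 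This closes the bootstrap: by a standard continuity argument — using local well-posedness of (5.1) in $C^{1}_{\tau}H^{k}$, valid since $k>n/2$ makes $H^{k}$ a Banach algebra and $F\in C^{k+1}$ — the bound $\sqrt{\mathcal{E}}\le c_{0}$, and hence $\sqrt{\mathcal{E}}\lesssim\nu$, persists on the whole interval $[0,c|\ln\nu|^{c}]$, which in particular continues $\phi$ up to $|\tau|\le c|\ln\nu|^{c}$. Since $\mathcal{E}\ge\|w\|_{H^{k}}^{2}+\|\partial_{\tau}w\|_{H^{k}}^{2}$, this is the desired estimate (5.6). The main obstacle is the $H^{k}$ control of the nonlinear mismatch $|\phi|^{\kappa-1}\phi-|\phi^{0}|^{\kappa-1}\phi^{0}$: it forces $\kappa$ to be large relative to $k>n/2$ so that $F\in C^{k+2}$ (hence $\phi^{0}$ and its bilaplacian have finite, polynomially growing $H^{k}$-norms), and it produces the $\exp(c\langle\tau\rangle^{k+1})$ loss in the Gr\"onwall step, which is exactly what caps the range of validity at times of size a power of $|\ln\nu|$.
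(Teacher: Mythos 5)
Your proposal follows essentially the same route as the paper: write $w=\phi-\phi^{0}$, treat $\nu^{4}\triangle^{2}\phi^{0}$ and the nonlinear mismatch as forcing, bound $\|\phi^{0}(\tau)\|_{H^{k}}$ and $\|\nu^{4}\triangle^{2}\phi^{0}(\tau)\|_{H^{k}}$ polynomially in $\tau$, control $F(\phi)-F(\phi^{0})$ via the algebra property of $H^{k}$ for $k>n/2$, and close with Gr\"onwall on $|\tau|\le c|\ln\nu|^{c}$. The only differences are organizational (you put $\|w\|_{H^{k}}^{2}$ into the energy functional, whereas the paper recovers it from the fundamental theorem of calculus, and you make the bootstrap/continuity step explicit where the paper only says ``under the assumption that $w(\tau)$ is bounded in $H^{k}$''), so the argument is correct and matches the paper's.
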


\begin{proof}
We define the function $F:\mathbb{C}\rightarrow \mathbb{C} $ by 
$$F(z)=|z|^{\kappa-1}z, and $$ plug in (5.4), thus 
$$\partial^2_\tau\phi^0=\omega F(\phi^0),$$
and the equation to be solved is 
$$\partial_\tau^2\phi +\nu^4 \triangle^2_y\phi =\omega F(\phi),$$
then with the ansatz 
$$\phi=\phi^0+w$$
$w $ is a solution of the Cauchy problem 
\begin{equation}
 \left\{ \begin{array}{lcl}
\partial_{\tau}^{2}w+\nu^4\triangle^{2}w=\nu^4\triangle^{2}\phi^0+\omega(F(\phi^0+w)-F(\phi^0)),\\
w(0,y)=0,\\
\partial _{\tau}w(0,y)=0\end{array}\right.
\end{equation}
Since $\kappa\geq k+2$, it's  guaranteed that $F$ is a $C^{k+2}$ function with all $k$ derivatives locally Lipschitz.
Define the $\nu-$energy of $w$ by 
\begin{equation}
E_\nu(w;\tau)=\int \frac{1}{2}|w_\tau(\tau,y)|^2+\frac{\nu^4}{2}|\triangle w(\tau,y)|^2 dy,
\end{equation}
if we have $\partial^2_\tau w+\nu^4\triangle^{2}w=\mathcal{F}$, then the energy identity gives 
\begin{equation}
\partial_\tau E_\nu (w;\tau)=\int w_\tau(\tau,y)\mathcal{F}(\tau,y)dy.
\end{equation}
By the Cauchy-Schwarz inequality
$$|\partial_\tau E_\nu^{\frac{1}{2}} (w;\tau)| \leq C\|\mathcal{F}(\tau)\|_2.$$
Similarly, if we define
$$E_{\nu,k}(w;\tau)=\sum_{j=0}^{k}\sum_{|\alpha|= j} E_\nu(\partial_y^{\alpha} w(\tau)),$$
then 
\begin{equation}
|\partial_\tau E_{\nu,k}^{\frac{1}{2}} (w;\tau)| \leq C\|\mathcal{F}(\tau)\|_{H^{k}}.
\end{equation}
and,
\begin{equation}
 E_{\nu,k}^{\frac{1}{2}} (w;\tau) \leq
\int_0^{\tau}|\partial_\tau E_{\nu,k}^{\frac{1}{2}} (w;\tau')|d\tau'\leq C\int _0^{\tau}\|\mathcal{F}(\tau')\|_{H^{k}}d\tau'.
\end{equation}

Since $\phi_0=\psi(y)^{2l},\psi(y)$ is Schwartz, $F$ is $C^{k+2}$ and $\mathcal{C}$ is $C^{k+4}$ ,
\begin{equation}
\|\phi^0\|_{H^{k}}+\|\phi^0\|_{C^k}\leq C(1+|\tau|)^{k},
\end{equation}
and 
 \begin{equation}
\|\nu^4 \triangle ^2\phi^0\|_{H^{k}}\leq C\nu^4 (1+|\tau|)^{k+4}
\end{equation}

Using Taylor formula, and the fact that $H^k$ is an algebra since $k>\frac{n}{2}$, we have
$$\|F(\phi^0+w)(\tau)-F(\phi^0)(\tau)\|_{H^{k}}\lesssim \|w(\tau)\|_{H^k}(\|w(\tau)\|_{H^k}^{\kappa-1}+\|\phi^0(\tau)\|_{H^{k}}^{\kappa-1}).$$
Define
 $$e(\tau):=\sup_{0\leq \tau'\leq \tau}E_{\nu,k}^{\frac{1}{2}} (w(\tau')),$$  which is a non-decreasing function. 
By the fundamental theorem of calculus 
\begin{equation}
\|w(\tau)\|_{H^{k}}\leq \int_0^\tau\|w_\tau(\tau')\|_{H^{k}}d\tau'\\
\leq\int_0^\tau E_{\nu,k}^{\frac{1}{2}} (w(\tau'))d\tau'\\
\leq C\tau e(\tau).
\end{equation}
  Under the assumption that $w(\tau)$ is bounded in $H^{k}$ and combining (5.13), (5.14), we have 
$$\|\nu^4 \triangle ^2\phi^0+F(\phi^0+w)(\tau)-F(\phi^0)(\tau)\|_{H^{k}}\leq C(1+|\tau|)^C (\nu^4+e(\tau)+e(\tau)^\kappa)$$
Then combines (5.12), we have the differential inequality
$$e(\tau)\leq C\int_0^\tau (1+|\tau'|)^C (\nu^4+e(\tau')+e(\tau')^\kappa)d\tau'$$
Since $e(0)=0$, by Gronwall's inequality and $w(\tau)$ is bounded in $H^{k}$, for $|\tau|\leq c|\ln \nu|^c$,  then we have $e(\tau)\leq C\nu^{\frac{7}{2}}$, and the claim follows from (5.14) if $\nu$ is sufficiently small.
\end{proof}

Now we prove Theorem 5.1:
\begin{proof}
Let  $0<\nu\ll 1$ be a parameter, we will construct solutions of (1.1) which are depending on $ \nu$, and analyze them quantitatively as $\nu \searrow 0.$ By the lemma above, for $\nu \leq c$ there exists a solution $\phi^\nu(\tau,y)=\phi(\tau,y)$ to the equation (5.1) and we have for $|\tau|\leq C|\ln \nu|^c,$
\begin{equation}
||\phi^{\nu}(\tau)-\phi^{0}(\tau)||_{H^{k}}+||\phi_\tau^{\nu}(\tau)-\phi_\tau^{0}(\tau)||_{H^{k}}\leq C|\nu|.
\end{equation}

Applying the scaling symmetry gives then solutions $u({t},x)=u^{(\nu, \lambda)}({t},x)$ to (1.1) defined by
\begin{equation}
u^{(\nu, \lambda)}({t},x) =\lambda ^{\frac{-4}{\kappa-1}}\phi^{\nu}(\lambda^{-2}{t},\lambda^{-1}\nu x).
\end{equation}
In particular, we have the initial data
\begin{equation}
u^{(\nu, \lambda)}(0,x) =\lambda ^{\frac{-4}{\kappa-1}}\phi_0(\lambda^{-1}\nu x);\qquad u_{{t}}^{(\nu, \lambda)}(0,x)=0.
\end{equation}
Assume $0<\lambda\leq \nu\ll 1$, and observe $$ [u^{(\nu, \lambda)}(0)]^{\wedge}(\xi) =\lambda ^{\frac{-4}{\kappa-1}}\left(\frac{\lambda}{\nu}\right)^n\hat{\phi_0}\left(\frac{\lambda}{\nu}\xi\right).$$
Hence $$ \|u^{(\nu, \lambda)}(0)\|_{H^s}^2= \lambda ^{\frac{-8}{\kappa-1}}\left(\frac{\lambda}{\nu}\right)^{2n}\int|\hat{\phi_0}\left(\frac{\lambda}{\nu}\xi\right)|^2(1+|\xi|^2)^sd\xi,$$
define $\eta=\frac{\lambda}{\nu}\xi,$
\begin{multline*}
 \|u^{(\nu, \lambda)}(0)\|_{H^s}^2= \lambda ^{\frac{-8}{\kappa-1}}\left(\frac{\lambda}{\nu}\right)^{n}\int|\hat{\phi_0}(\eta)|^2(1+\left|\frac{\nu}{\lambda}\eta\right|^2)^sd\eta\\
\approx \lambda ^{\frac{-8}{\kappa-1}}\left(\frac{\lambda}{\nu}\right)^{n-2s}\int_{|\eta|\geq\lambda\nu^{-1}}|\hat{\phi_0}(\eta)|^2|\eta|^{2s}d\eta+ \lambda ^{\frac{-8}{\kappa-1}}\left(\frac{\lambda}{\nu}\right)^{n}\int_{|\eta|\leq\lambda\nu^{-1}}|\hat{\phi_0}(\eta)|^2d\eta\\
=\lambda ^{\frac{-8}{\kappa-1}}\left(\frac{\lambda}{\nu}\right)^{n-2s}\int_{\mathbb{R}^n}|\hat{\phi_0}(\eta)|^2|\eta|^{2s}d\eta\\+ \lambda ^{\frac{-8}{\kappa-1}}\left(\frac{\lambda}{\nu}\right)^{n-2s}\int_{|\eta|\leq\lambda\nu^{-1}}|\hat{\phi_0}\left(\eta)|^2(\left(\frac{\lambda}{\nu}\right)^{2s}-|\eta|^{2s}\right)d\eta.
\end{multline*}
Then for some constant $C$, we have $$ \|u^{(\nu, \lambda)}(0)\|_{H^s}\leq C\lambda ^{\frac{-4}{\kappa-1}}\left(\frac{\lambda}{\nu}\right)^{\frac{n}{2}-s}=C\lambda^{s_c-s}\nu^{s-\frac{n}{2}}.$$
Given $\nu$, define $\lambda$
\begin{equation}
\lambda^{s_c-s}\nu^{s-\frac{n}{2}}=\epsilon.
\end{equation}
Consider the behavior of $u^{(\nu, \lambda)}(\tilde{t}) $ for $\tilde{t}>0$, starting with the analysis of $\phi^{0}(\tilde{t},x)$ for $\tilde{t}\gg 1$, gives,
$$\partial_x^j\phi^{0}(\tilde{t},x)=\phi_0(x){\tilde{t}}^j(\nabla_x|\phi_0(x)|^{\frac{\kappa-1}{2}})^j\mathcal{C}^{(j)}(\tilde{t}|\phi_0(x)|^{\frac{\kappa-1}{2}})+O({\tilde{t}}^{j-1}),$$
for $j=0,1,...,k.$ Since $\mathcal{C}$ and its derivatives only vanish on a countable set we thus have
$$\|\phi^{0}(\tilde{t})\|_{H^j}\sim{ \tilde{t}}^j.$$
In particular, since the Sobolev norms $H^s$ are interpolation spaces,
$$\|\phi^{0}(\tilde{t})\|_{H^s}\sim{\tilde{t}}^s,$$
whenever $s\geq 0$ is no larger than the greatest integer $\leq\kappa-1$. If $\nu\ll 1$ and $1\ll \tilde{t}\leq c|\ln\nu|^c$, (5.6) thus implies that 
\begin{equation}
\|\phi^{\nu}(\tilde{t})\|_{H^s}\sim{\tilde{t}}^s.
\end{equation}
This estimate indicates that as time progresses, the function $\phi^{\nu}(\tilde{t})$ transfers its energy to increasingly higher frequencies.
We now exploit the supercriticality of $s$ via the scaling parameter $\lambda$ to create arbitrarily large $H^s$ norms at arbitrarily small times. Applying (5.6), we have 
 $$ [u^{(\nu, \lambda)}(\lambda^2\tilde{t})]^{\wedge}(\xi) =\lambda ^{\frac{-4}{\kappa-1}}(\frac{\lambda}{\nu})^n[\phi^{\nu}(\tilde{t})]^{\wedge}(\frac{\lambda}{\nu}\xi).$$
By the change of variables $\eta: = \frac{\lambda}{\nu}\xi$
$$ \|u^{(\nu, \lambda)}(\lambda^2\tilde{t})\|_{H^s}^2\geq c \lambda ^{\frac{-8}{\kappa-1}}(\frac{\lambda}{\nu})^{n}\int|[\phi^{\nu}(\tilde{t})]^{\wedge}(\eta)|^2(1+|\frac{\nu}{\lambda}\eta|^2)^sd\eta.$$
Since $\frac{\lambda}{\nu}\leq 1$,
$$\int|[\phi^{\nu}(\tilde{t})]^{\wedge}(\eta)|^2(1+|\frac{\nu}{\lambda}\eta|^2)^sd\eta\geq (\frac{\lambda}{\nu})^{-2s}\int_{|\eta|\geq1}|[\phi^{\nu}(\tilde{t})]^{\wedge}(\eta)|^2|\eta|^{2s}d\eta$$
$$\geq(\frac{\lambda}{\nu})^{-2s}(c\|\phi^{\nu}(\tilde{t})\|_{H^s}^2-C\|\phi^{\nu}(\tilde{t})\|_{H^0}^2).$$
From (5.19) it is apparent that $\|\phi^{\nu}(\tilde{t})\|_{H^0}\ll \|\phi^{\nu}(\tilde{t})\|_{H^s}$ for $\tilde{t}\gg1.$ Thus by (5.18) and (5.19)
$$ \|u^{(\nu, \lambda)}(\lambda^2\tilde{t})\|_{H^s}\geq c \lambda ^{\frac{-4}{\kappa-1}}(\frac{\lambda}{\nu})^{\frac{n}{2}-s} \|\phi^{\nu}(\tilde{t})\|_{H^s}\geq c\epsilon {\tilde{t}}^s.$$
Therefore for $ \|u(t)\|_{H^s}$, when $\tilde{t}\approx c|\ln\nu|^c$, choose $\nu$ is small enough such that
$$c|\ln\nu|^c\gg\epsilon^{-\frac{2}{s}},$$ for $t=\lambda^2\tilde{t},$ $\nu$ sufficiently small,
$$t\approx c|\ln\nu|^c\lambda^2=C|\ln\nu|^c\nu^{2(\frac{n/2-s}{s_c-s})}\epsilon ^{\frac{2}{s_c-s}}<\epsilon,$$
we have 
$$ \|u(t)\|_{H^s}\geq \epsilon^{-1}.$$
Theorem 5.1 follows.
\end{proof}


\begin{thebibliography}{99}


\bibitem{ Elena Cordero } Cordero, E. and Zucco, D., \emph{Strichartz estimates for the vibrating plate equation}, Journal of Evolution Equations Volume 11, Issue 4 (2011) 827-845.
\bibitem{Christ}Christ, M., Colliander, J,  and Tao, T., \emph{Ill-posedness for nonlinear Schr\"{o}dinger equations and wave equations,} arXiv:math/0311048v1 [math.AP] 4 Nov 2003.
\bibitem{Cazenave}Cazenave, T., \emph{Semilinear Schr\"{o}dinger equations, volume 10 of Courant Lecture Notes in mathematics},  New York University Courant Institute of Mathematical Sciences, New York, 2003.

\bibitem{Cazenave}Cazenave, T., and  Weissler, F. B., \emph{The Cauchy problem for the critical nonlinear Schr\"{o}dinger equation in $H^s$,} Nonlinear analysis Volume 14, Issue 10 (1990) 807-836.



\bibitem{Ginibre}Ginibre, J. and Velo, G., \emph{Generalized Strichartz inequalities for the wave equation}, J. Funct. Anal, 133(1): 1995, 50-68.
\bibitem{Gulisashvili }Gulisashvili, A. and  Kon, M.A., \emph{Smoothness of  Schr\"{o}dinger semigroups and eigenfunctions,} International Math. Res. Notices (1994) 193-199.
\bibitem{kato} Kato, T.,\emph{ On nonlinear Schr\"{o}dinger equations, II. $H^s$ -solutions and unconditional well-posedness}, J. Anal. Math. 67 (1995), 281-306.
\bibitem{ Keel}  Keel, M. and  Tao, T., \emph{ Endpoint Strichartz inequalities}, Amer. J. Math. 120 (1998) 955-980.
\bibitem{Lebeau}Lebeau, G., \emph{Control for Hyperbolic equations,} Journ\'{e}es \'{E}quations aux de\'{e}riv\'{e}es partielles, (1992) 1-24.

\bibitem{Lindblad} Lindblad, H. and Sogge, C. D., \emph{On Existence and Scattering with minimal regularity for semilinear wave equations}, Journal of Functional Analysis. 130 (1995) 357-426.

\bibitem {Pausader}Pausader, B., \emph{Scattering and the Levandosky-Strauss conjecture for fourth order nonlinear wave equations}, J. Differential Equations 241 (2) (2007) 237-278.
\bibitem {Pausader} Pausader, B., \emph{Scattering for the beam equation in low dimensions}, Indiana Univ. Math. J., 59 (2010), no. 3, 791-822.
\bibitem{Peletier} Peletier, L. and Troy, W. C., \emph{Spatial patterns. Higher order models in Physics and Machanics,} Progress in Nonlinear Differential Equations and Their applications, Volume 45, Birkhauser, 2001.
\bibitem{Stein} Stein E M., \emph {Singular integrals and differentiability properties of functions,} Princeton University Press, Princeton (1970).
\bibitem{Strichartz}Strichartz, R.S., \emph{Restriction of Fourier Transform to Quadratic Surfaces and Decay of Solutions of Wave Equations,} Duke Math. J. 44 (1977), 70 5-774.

\bibitem{Tao}Tao, T., \emph{ Nonlinear Dispersive Equations: local and global analysis,} July 2006, 373 pages. ISBN-10: 0-8218-4143-2.

\bibitem{Zhang} Zhang, X., \emph{Exact controllability of the semilinear plate equation,} Asymptol.Anal., 27, (2001) 95-125.
\end{thebibliography}
\end{document}